\theoremstyle{definition}
\newtheorem{definition}{Definition}%[section]
\newtheorem{theorem}[definition]{Theorem}
\newtheorem{proposition}[definition]{Proposition}
\newtheorem{corollary}[definition]{Corollary}
\theoremstyle{remark}
\newtheorem{remark}[definition]{Remark}
\newtheorem{example}[definition]{Example}
\newcounter{enumctr}
\newcommand{\N}{\mathbb{N}}
\newcommand{\R}{\mathbb{R}}
\newcommand{\Q}{\mathbb{Q}}
\newcommand{\Z}{\mathbb{Z}}
\renewcommand{\phi}{\varphi}
\begin{document}
%
%\linenumbers
%
%
\title{\vspace*{-10mm}
Semigroup property of fractional differential operators and its applications}
\author{
N.D.~Cong\footnote{\tt ndcong@math.ac.vn, \rm Institute of Mathematics, Vietnam Academy of Science and Technology, 18 Hoang Quoc Viet, 10307 Ha Noi, Viet Nam.}
}

%\date{19 July 2021}
\date{}

\maketitle
\begin{abstract}
We establish partial semigroup property of Riemann-Liouville and Caputo fractional differential operators. Using this result we prove theorems on reduction of multi-term  fractional differential systems to single-term and multi-order systems, and prove existence and uniqueness of solution to multi-term Caputo fractional differential systems.
\end{abstract}

\section{Introduction}

The theory of fractional differential equations has been developed fast in the last few decades due to the contributions of scientists from various fields of research (see \cite{Kai,KilbasSriTru06,Podlubny} and the reference therein). With its long memory and nonlocal feature the fractional differential equations serve better to modeling real world precesses in many applications from physics, engineering to biology and finance. It is of great importance, both from the theoretical and practical point of view, to understand the fundamental properties of fractional differential operators, especially in comparison with the well investigated properties of the classical integer order differential operators. 

In this paper we deal with one of the fundamental properties of the fractional differential operators, namely we study the semigroup property of Riemann-Liouville and Caputo fractional differential operators.
It is known that one of the important property of the classical differentiation is its semigroup property: if a function $f\in C^m[a,b]$, $[a,b]\subset\R$, $m\in\N$, i.e. $f$ is $m$ times continuously differentiable on $[a,b]$, then for all $n,l\in\N$, $n+l\leq m$ we have $f\in C^n[a,b]$, $D^n f\in C^l[a,b]$ and $D^l(D^n f)=D^{n+l} f$. This semigroup property allows us to reduce a high order ordinary differential equation  to a high-dimensional system of ordinary differential equations of order one, for which  a well established theory is already developed. As for the fractional differential operators it is well known that the semigroup property does, in general, not hold (see \cite[Examples 2.6 - 2.7, Remark 3.4]{Kai}, \cite[Sections 2.3.6, 2.4.1]{Podlubny}). Diethelm~\cite[Lemma 3.13]{Kai} has proved a kind of weak semigroup property of the Caputo fractional differential operators and used it to derive a theorem on 
reduction of a multi-term Caputo fractional differential equation to a single-term Caputo fractional differential equation \cite[Theorem 8.1]{Kai}. This result is a useful tool for studying multi-term fractional differential equations, since it allows us to reduce a multi-term system to a single-term system for which many tools of investigation are available. There are a number of works dealing with this problem, especially the reduction to single-term fractional differential equations is used to study the asymptotic behavior of solutions to multi-order fractional differential systems \cite{Kai, DieSieTuan2017, Badri2019, Deng2007, Li2013, Rebenda2019}. However, for a proof of  this weak semigroup property of Caputo fractional differential operators by Diethelm~\cite[Lemma 3.13, Theorems 8.1, 8.2, 8.9]{Kai}, the author requires a strong assumption of smoothness of solution. This high smoothness assumption seems unnatural, and can hardly be fulfilled for a general Caputo fractional differential equation. For a simple example let us consider an initial value problem with Caputo fractional differential equation
  $$
{}^C\!D_{0}^{\alpha} x(t) - \lambda x(t) =0, \quad t\in [0,1], 0<\alpha<1, \lambda\in\R, x(0)=x_0\not=0.
$$
The solution to this problem is the Mittag-Leffler function $x_0E_\alpha(\lambda t^\alpha)$ (see Diethelm~\cite[Theorem 7.2]{Kai}), which is clearly continuous on $[0,1]$ but its derivative,  $\frac{d}{dt}x_0E_\alpha(\lambda t^\alpha) = x_0\lambda t^{\alpha-1}E_{\alpha,\alpha}(\lambda t^\alpha)$, has a singularity at 0, hence the solution $x_0E_\alpha(\lambda t^\alpha)\not\in C^1[0,1]$, thus does not have the degree of smoothness required by Diethelm~\cite[Lemma 3.13]{Kai} for this Caputo fractional differential equation. 
The problem is more visible if we replace the constant $\lambda$ in the above equation by a piecewise constant or piecewise continuous function $\lambda(t)$, what can easily lead to non-differentiability of the solution in points in the interior of the interval $[0,1]$.
This shows that the strong assumption of  smoothness is hardly satisfied, so that Lemma 3.13 of Diethelm~\cite{Kai}, hence Theorems 8.1, 8.2, 8.9 therein, have limited applicability to the Caputo fractional differential equations. This example also shows that a natural requirement for the solutions of initial value problem to the Caputo fractional differential equations is continuous Caputo fractional differentiability of order $\alpha$---the order of the fractional differential equation, and that is what we do in this paper. 

In this paper we will prove a (partial) semigroup property for the Riemann-Liouville as well as for the Caputo fractional differential operators under natural assumptions of fractional differentiability of the function under the operators. 

 The paper is organized as follow. In Section~\ref{sec.preliminaries} we present the definitions of Riemann-Liouville and Caputo fractional differential operators which will be used in this paper. We present two approaches to the definition of the operators: the integrable and the continuous setting. In each case we describe the domain and the range of the operator in details. In Section~\ref{sec.RelDifOrders} we present the main results of the paper: the (partial) semigroup property of Riemann-Liouville and Caputo fractional differential operators. The results in this section provide us with a complete description of a (partial) semigroup property of the Riemann-Liouville/Caputo fractional differential operators. In Section~\ref{sec.applications} we present two important applications of the results on semigroup property derived in Section~\ref{sec.RelDifOrders}, namely the reduction of multi-term Caputo fractional differential equations to single-term and multi-order Caputo fractional differential systems, and a proof of the existence and uniqueness of solution to multi-term Caputo fractional differential equation. Section~\ref{sec.examples} presents two illustrative examples for the results of the preceeding sections.

   \section{Preliminaries}\label{sec.preliminaries}
   Recall the notion of Riemann-Liouville fractional integral operator from Diethelm~\cite[Definition 2.1, p. 13]{Kai}.     Let   $[a,b]\subset \R$, $-\infty<a<b<\infty$, be a closed interval of the real line.  
For $\alpha>0$,   the operator  $J_a^\alpha$ defined on $L_1[a,b]$ by
   $$
   J_a^\alpha f(x) := \frac{1}{\Gamma(\alpha)}\int_a^x (x-t)^{\alpha-1} f(t) dt,
   $$
   for $a\leq x\leq b$ is called {\em Riemann-Liouville fractional integral operator of order $\alpha$}. For $\alpha=0$, we set $J_a^0=I$, the identity operator.
   
   The integral operator $J_a^\alpha$ has semigroup property with respect to its parameter $\alpha\geq 0$ 
   \cite[Theorem 2.2, p. 14]{Kai},
   $$
    J_a^\alpha  J_a^\beta =  J_a^{\alpha+\beta}\quad \hbox{for all}\quad \alpha>0,\beta >0.
    $$
    Furthermore, clearly  $J_a^\alpha f(x)\in C[a,b]$, hence the range of the operator $J_a^\alpha$, $\alpha>0$, is in $C[a,b]$.
     Clearly, $C[a,b]\subset L_1[a,b]$ hence $J_a^\alpha$ can also be considered as an operator from $C[a,b]$ to $C[a,b]$ for all $\alpha\geq 0$.

In this paper we deal with two well-known fractional differential operators, Riemann-Liouville and Caputo fractional differential operators, which are defined based on the Riemann-Liouville integral operator.  Like the integral operator we can consider the differential operators, which are inverses of  the integral operator, in two settings: integrable and continuous. 
 
Now we give a definition of Riemann-Liuoville fractional differential differentiation in both the integrable and continuous settings  (cf. Diethelm~\cite[Definition 2.2, p. 27]{Kai} and Vainikko~\cite[Section 4]{Vainikko2016}).

Denote by $\R,\Q,\Z,\N$ the sets of real numbers, rational numbers, integers and natural numbers, respectively.
  For a real number  $\alpha\in\R$ we denote by $\lceil \alpha\rceil$ the ceilling of $\alpha$, i.e. smallest integer no smaller $\alpha$, by $\lfloor\alpha\rfloor$ the floor of $\alpha$, i.e. the greatest integer no greater than $\alpha$. 

   \begin{definition}[Riemann-Liouville fractional derivative]\label{def.RL}
   Let $0<\alpha\in \R$ and $m:= \lceil \alpha\rceil$. The operator ${}^{RL}\!D_{a}^{\alpha}$ defined on 
   $L_1[a,b]$ by
   $$
   {}^{RL}\!D_{a}^{\alpha}f := D^m J_a^{m-\alpha} f,
   $$
 where $D^m$ is the ordinary differential operator of integer order $m$, is called the {\em Riemann-Liouville fractional operator of order $\alpha$}. For $\alpha=0$, we set ${}^{RL}\!D_{a}^{0}=I$, the identity operator.
 
 If the function $D^m J_a^{m-\alpha} f$ exists and is in the class 
 $L_1[a,b]$  then we say that the function $f$ is {\em Riemann-Liouville $\alpha$-differentiable}. 
  
   If  $f\in C[a,b]$ and
   $D^m J_a^{m-\alpha} f$ exists and is in the class $C[a,b]$ then we say that $f$ is {\em continuously Riemann-Liouville $\alpha$-differentiable}.
   \end{definition}
   
   So, in the integrable setting we consider ${}^{RL}\!D_{a}^{\alpha}: L_1[a,b]\to L_1[a,b]$, whereas in the continuous setting ${}^{RL}\!D_{a}^{\alpha}: C[a,b]\to C[a,b]$.
     Cleary, for each fixed $\alpha>0$,  there are functions $f\in L_1[a,b]$ which are not Riemann-Liouville $\alpha$-differentiable, and there are functions $f\in C[a,b]$ which are neither continuously Riemann-Liouville $\alpha$-differentiable nor Riemann-Liouville $\alpha$-differentiable.

   Now we recall a definition of Caputo fractional differential differentiation in both the integrable and continuous settings  (cf. Diethelm~\cite[Definition 3.2, p. 50]{Kai} and 
   Vainikko~\cite[Section 5]{Vainikko2016}). 
   \begin{definition}[Caputo fractional derivative]\label{def.Cap}
   Let $0 < \alpha\in \R$ and $m:= \lceil \alpha\rceil$. Moreover, assume that $f\in C^{m-1}[a,b]$ is such that 
 ${}^{RL}\!D_{a}^{\alpha}f$ exists and belongs to $L_1[a,b]$.   The operator ${}^{C}\!D_{a}^{\alpha}$ defined on 
   $C^{m-1}[a,b]$ by
   $$
   {}^{C}\!D_{a}^{\alpha}f := {}^{RL}\!D_{a}^{\alpha}(f- T_{m-1}[f;a]),
   $$
   where $T_{m-1}[f;a]:= \sum_{k=0}^{m-1} \frac{(D^kf)(a)}{k!}(t-a)^k$ denotes the Taylor polynomial of order $m-$ of $f$ centered at $a$,
  is called the {\em Caputo fractional operator of order $\alpha$}.  For $\alpha=0$, we set ${}^{C}\!D_{a}^{0}=I$, the identity operator.
 
 If the function ${}^{C}\!D_{a}^{\alpha}f$ exists and is in the class 
 $L_1[a,b]$  then we say that the function $f$ is {\em Caputo $\alpha$-differentiable}. 
  
   If  ${}^{C}\!D_{a}^{\alpha}f$ exists and is in the class $C[a,b]$ then we say that $f$ is {\em continuously Caputo $\alpha$-differentiable}.
   \end{definition}
   
   In parallel with the Riemann-Liouville fractional differentiation we see that in the integrable functions setting 
   ${}^{C}\!D_{a}^{\alpha}: C^{m-1}[a,b]\to L_1[a,b]$, whereas in the continuous functions setting 
   ${}^{C}\!D_{a}^{\alpha}: C^{m-1}[a,b]\to C[a,b]$.
 Note that in case $f\in C^m[a,b]$ there exists continuous ${}^{C}\!D_{a}^{\alpha}f $, and if additionally $\alpha\not\in\N$ then ${}^{C}\!D_{a}^{\alpha}f (a)=0$, see Diethelm~\cite[Lemma 3.11, p. 56]{Kai}.

   Notice that in Definition~\ref{def.Cap} we require the function $f\in C[a,b]$ to be of class $C^{\lceil \alpha\rceil-1}[a,b]$, hence ${}^{C}\!D_{a}^{\alpha} : C^{\lceil \alpha\rceil-1}[a,b]\to C[a,b]$; whereas some other authors (including Caputo in his original work \cite{Caputo67}) require the function $f$ to be of class $C^{\lceil \alpha\rceil}[a,b]$ to define its Caputo $\alpha$-derivative by a formula other than that of Definition~\ref{def.Cap}, hence in their approach ${}^{C}\!D_{a}^{\alpha} : C^{\lceil \alpha\rceil}[a,b]\to C[a,b]$. The first approach is a generalization of the second one, and in case $f\in C^{\lceil \alpha\rceil}[a,b]$ the two definitions coincide.
   
   Unlike the fractional integral operators $J_a^\alpha$, $0\leq \alpha\in\R$, and differential operators of integer orders $D^m$, $m\in\N$, the fractional differential operators, both Riemann-Liouville ${}^{RL}\!D_{a}^{\alpha}f$ and Caputo ${}^{C}\!D_{a}^{\alpha}f $, $0\leq\alpha\in\R$, do not have semigroup property with respect to the parameter $\alpha$ of differentiation. They may possess semigroup property under some additional assumptions, see Diethelm~\cite[Theorem 2.13, p. 29]{Kai} for the Riemann-Liouville case,  Diethelm~\cite[Lemma 3.13, p. 56]{Kai}  for the Caputo case; and Section \ref{sec.RelDifOrders} in this paper.
   
   \section{Semigroup property of Riemann-Liouville and Caputo fractional differential operators}\label{sec.RelDifOrders}

In this section we investigate in details semigroup property for Riemann-Liouville ${}^{RL}\!D_{a}^{\alpha}f$ and Caputo fractional differential operators ${}^{C}\!D_{a}^{\alpha}f$. The semigroup property derived is partial because it is not for arbitrary order of differentiation $0\leq \alpha\in\R$ but only true for a particular subset of the parematers  (see below for details). However, the theorems we get are true for all the functions of suitable $\alpha$-differentiability, similar to the case of differentiation of integer order.
    
      \subsection{Semigroup property of Riemann-Liouville fractional differential operators}\label{subsec.RL}
   
     \begin{theorem}[Partial semigroup property of Riemann-Liouville fractional differential operators]\label{thm.RL1}
    I. Let $0\leq\beta<\alpha < 1$ be arbitrary, $f\in C[a,b]$ be continuously Riemann-Liouville $\alpha$-differentiable. Then 
    
   (i) $f$ is continuously Riemann-Liouville $\beta$-differentiable with  ${}^{RL}\!D_{a}^{\beta}f\in C[a,b]$. Moreover,  $({}^{RL}\!D_{a}^{\beta}f)(a) = 0$.
   
 (ii)  The continuous fractional derivative $f_1:={}^{RL}\!D_{a}^{\beta}f\in C[a,b]$ is continuously Riemann-Liouville $(\alpha-\beta)$-differentiable. Moreover,
  ${}^{RL}\!D_{a}^{\alpha-\beta}f_1= {}^{RL}\!D_{a}^{\alpha}f$, thus 
 ${}^{RL}\!D_{a}^{\alpha-\beta}({}^{RL}\!D_{a}^{\beta}f)= {}^{RL}\!D_{a}^{\alpha}f$. 
 
 (iii) If $f\in C^1[a,b]$ and $f(a)=0$ then for any $0\leq \beta\leq 1$ the function $f$ is continuously Riemann-Liouville $\beta$-differentiable with derivative $f_1={}^{RL}\!D_{a}^{\beta}f\in C[a,b]$. Moreover, 
  ${}^{RL}\!D_{a}^{1-\beta}f_1= Df$, thus 
 ${}^{RL}\!D_{a}^{1-\beta}({}^{RL}\!D_{a}^{\beta}f)= Df$.
 
 II. Conversely, let $0<\beta<1$, $\gamma>0$ be arbitrary. Assume that a continuous function $h\in C[a,b]$ is continuously Riemann-Liouville $\beta$-differentiable with derivative ${}^{RL}\!D_{a}^\beta h =: h_1$, and $h_1$ is continuously Riemann-Liouville $\gamma$-differentiable, then $h$ is continuously Riemann-Liouville $(\beta+\gamma)$-differentiable, and 
 ${}^{RL}\!D_{a}^{\gamma}h_1= {}^{RL}\!D_{a}^{\beta+\gamma}h$, thus 
 ${}^{RL}\!D_{a}^{\gamma}({}^{RL}\!D_{a}^{\beta}h)= {}^{RL}\!D_{a}^{\beta+\gamma}h$.
   \end{theorem}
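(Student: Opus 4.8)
The plan is to reduce the entire statement to a single reconstruction identity,
$$h = J_a^\beta h_1,$$
after which the conclusion follows purely from the semigroup property $J_a^\alpha J_a^\beta = J_a^{\alpha+\beta}$ of the integral operator. First I would establish this identity. Since $0<\beta<1$ we have $\lceil\beta\rceil=1$, so by definition $h_1 = {}^{RL}\!D_a^\beta h = D J_a^{1-\beta} h$. Set $g := J_a^{1-\beta} h$. As the range of $J_a^{1-\beta}$ lies in $C[a,b]$ (and $h\in C[a,b]\subset L_1[a,b]$), the function $g$ is continuous, and directly from the integral formula $g(a)=0$. The hypothesis that $h$ is \emph{continuously} Riemann-Liouville $\beta$-differentiable means precisely that $Dg = h_1\in C[a,b]$, so $g\in C^1[a,b]$. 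The fundamental theorem of calculus together with $g(a)=0$ then gives $g = J_a^1 h_1$, i.e. $J_a^{1-\beta}h = J_a^1 h_1$. Applying $J_a^\beta$ to both sides and invoking the integral semigroup law in the forms $J_a^\beta J_a^{1-\beta}=J_a^1$ and $J_a^\beta J_a^1 = J_a^1 J_a^\beta$ turns this into $J_a^1 h = J_a^1(J_a^\beta h_1)$; since both $h$ and $J_a^\beta h_1$ are continuous, injectivity of $J_a^1$ (equivalently, differentiation) yields the desired identity $h = J_a^\beta h_1$.

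Next I would compute the $(\beta+\gamma)$-derivative. Put $m := \lceil \beta+\gamma\rceil$ and $n := \lceil \gamma\rceil$. By definition ${}^{RL}\!D_a^{\beta+\gamma}h = D^m J_a^{m-\beta-\gamma} h$; substituting $h = J_a^\beta h_1$ and using $J_a^{m-\beta-\gamma}J_a^\beta = J_a^{m-\gamma}$ (valid since $m-\beta-\gamma\ge 0$ and $\beta>0$) rewrites this as $D^m J_a^{m-\gamma} h_1$. Because $0<\beta<1$, the two ceilings satisfy $\gamma < \beta+\gamma < \gamma+1$, hence $m-n\in\{0,1\}$. I can therefore factor $J_a^{m-\gamma}=J_a^{m-n}J_a^{n-\gamma}$ and write $D^m J_a^{m-\gamma}h_1 = D^n\bigl(D^{m-n}J_a^{m-n}\bigr)J_a^{n-\gamma}h_1$. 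Since $J_a^{n-\gamma}h_1$ is continuous and $D^{m-n}J_a^{m-n}$ is the identity on continuous functions for $m-n\in\{0,1\}$ (by the fundamental theorem of calculus), this collapses to $D^n J_a^{n-\gamma} h_1 = {}^{RL}\!D_a^{\gamma} h_1$.

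This chain of equalities shows simultaneously that ${}^{RL}\!D_a^{\beta+\gamma}h$ exists, equals ${}^{RL}\!D_a^\gamma h_1$, and --- since $h_1$ is assumed continuously Riemann-Liouville $\gamma$-differentiable --- belongs to $C[a,b]$; hence $h$ is continuously Riemann-Liouville $(\beta+\gamma)$-differentiable and ${}^{RL}\!D_a^\gamma({}^{RL}\!D_a^\beta h) = {}^{RL}\!D_a^{\beta+\gamma}h$, as claimed. I expect the only genuinely delicate point to be the reconstruction identity $h=J_a^\beta h_1$: it hinges on the vanishing $g(a)=(J_a^{1-\beta}h)(a)=0$, which removes the boundary correction term that normally obstructs $J_a^\beta {}^{RL}\!D_a^\beta h = h$, and on the continuity of $h_1$, which is exactly what upgrades $g$ to a $C^1$ function so the fundamental theorem of calculus applies. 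Everything following that identity is routine bookkeeping with the integral semigroup law and the elementary relation $m-n\in\{0,1\}$.
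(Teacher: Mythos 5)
Your argument for Part II is correct, and it is in fact a cleaner route than the paper's: you first reconstruct $h = J_a^\beta h_1$ from $g(a)=(J_a^{1-\beta}h)(a)=0$ and the fundamental theorem of calculus, and then everything reduces to the semigroup law for $J_a$; the paper instead invokes Miller--Ross to commute $D$ past $J_a^{k-\gamma}$ and splits into the cases $\lceil\beta+\gamma\rceil=\lceil\gamma\rceil$ and $\lceil\beta+\gamma\rceil=\lceil\gamma\rceil+1$, which your observation that $D^{m-n}J_a^{m-n}$ is the identity on $C[a,b]$ for $m-n\in\{0,1\}$ handles more transparently.

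However, there is a genuine gap: you have proved only Part II. Part I (items (i), (ii), (iii)) is logically independent of Part II and is not addressed anywhere in your write-up. Part II assumes that $h$ is continuously Riemann--Liouville $\beta$-differentiable and that $h_1$ is $\gamma$-differentiable, and concludes $(\beta+\gamma)$-differentiability; Part I goes in the opposite direction: from continuous $\alpha$-differentiability alone it must be deduced that $f$ is continuously $\beta$-differentiable for every smaller $\beta$, that $({}^{RL}\!D_a^\beta f)(a)=0$, and that ${}^{RL}\!D_a^{\alpha-\beta}({}^{RL}\!D_a^\beta f)={}^{RL}\!D_a^\alpha f$. Your reconstruction identity $h=J_a^\beta h_1$ presupposes exactly the $\beta$-differentiability that Part I is required to establish, so it cannot serve as the reduction you announce for ``the entire statement.'' The good news is that your own method does extend: the same argument applied at level $\alpha$ gives $f=J_a^\alpha\bigl({}^{RL}\!D_a^\alpha f\bigr)$, from which ${}^{RL}\!D_a^\beta f = DJ_a^{1-\beta}J_a^\alpha F = DJ_a^1 J_a^{\alpha-\beta}F = J_a^{\alpha-\beta}F$ with $F:={}^{RL}\!D_a^\alpha f$; this is continuous, vanishes at $a$, and satisfies $J_a^{1-(\alpha-\beta)}\bigl(J_a^{\alpha-\beta}F\bigr)=J_a^1F\in C^1[a,b]$, yielding (i) and (ii), with (iii) following from the classical fact that $f\in C^1[a,b]$, $f(a)=0$ implies $f=J_a^1(Df)$ is continuously $1$-differentiable (hence $\beta$-differentiable by the above). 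But none of this is in your proof as written, so as it stands the theorem is only half proved.
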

   \begin{proof}
I. (i) Since $f$ is continuously Riemann-Liouville $\alpha$-differentiable we have $J_{a}^{1-\alpha}f \in C^1[a,b]$. Put 
$g:= J_{a}^{1-\alpha}f$. Then $g\in C^1[a,b]$, and $g(a)=0$ because $1-\alpha>0$. Since $\alpha-\beta>0$, by Miller and Ross~\cite[Theorem 2(b), p. 59]{MillerRoss93} we have
$$
D(J_a^{\alpha-\beta}g) = J_a^{\alpha-\beta}(Dg) +\frac{(t-a)^{\alpha-\beta-1} g(a)}{\Gamma(\alpha-\beta)}
= J_a^{\alpha-\beta}(Dg).
$$
Consequently, $J_a^{\alpha-\beta}g\in C^1[a,b]$. But, due to group property of the integral operator $J_a$ we have
$$
J_a^{\alpha-\beta}g = J_a^{\alpha-\beta}J_{a}^{1-\alpha}f = J_a^{1-\beta}f,
$$
Hence $J_a^{1-\beta}f$ is of $C^1[a,b]$, thus $f$ is Riemann-Liouville $\beta$-differentiable with continuous derivative ${}^{RL}\!D_{a}^{\beta}f\in C[a,b]$. 

Now, since  
$$
{}^{RL}\!D_{a}^{\beta}f = D(J_a^{1-\beta}f) = D(J_a^{\alpha-\beta}g) = J_a^{\alpha-\beta}(Dg)
$$
we have $({}^{RL}\!D_{a}^{\beta}f)(a) = J_a^{\alpha-\beta}(Dg)(a)=0$ because $Dg\in C[a,b]$ and $\alpha-\beta>0$.

(ii)  Since 
$f_1= {}^{RL}\!D_{a}^{\beta}f = J_a^{\alpha-\beta}(Dg)$, $g\in C^1[a,b]$ and  $g(a)=0$, due to the group property of the fractional Riemann-Liouville integral, we have 
$$
J_a^{1-(\alpha-\beta)}f_1 = J_a^{1-(\alpha-\beta)}J_a^{\alpha-\beta}(Dg) = J_a^{1}(Dg)=g \in C^1[a,b].
$$
Consequently, $J_a^{1-(\alpha-\beta)}f_1\in C^1[a,b]$, hence
 $f_1$ is continuously Riemann-Liouville $(\alpha-\beta)$-differentiable. 
Now, we have  
$$
{}^{RL}\!D_{a}^{(\alpha-\beta)}f_1 = D(J_a^{1-(\alpha-\beta)}f_1) = Dg = D(J_{a}^{1-\alpha}f)
= {}^{RL}\!D_{a}^{\alpha}f.
$$ 

(iii) If $\beta=1$ or $\beta=0$ then (iii) is obviously true. Suppose that $0<\beta<1$.  From the assumption $f\in C^1[a,b]$ and $f(a)=0$, and due to Diethelm~\cite[Lemma 2.12, p. 27]{Kai}, it implies that for any $0 <\beta< 1$ the function $f$ is continuously Riemann-Liouville $\beta$-differentiable. Use the arguments of (i) and (ii) above one can easily get the assertions of (iii).

II. By assumption $J_a^{1-\beta} h\in C^1[a,b]$ and $h_1= D(J_a^{1-\beta} h)\in C[a,b]$, hence obviously $(J_a^{1-\beta} h)(a)=0$. Put $k:=\lceil\gamma\rceil\geq 1$ then $k\leq \lceil\beta+\gamma\rceil\leq k+1$. From the assumption that $h_1$ is continuously Riemann-Liouville $\gamma$-differentiable it follows that
\begin{equation}\label{eqn.RL1}
J_a^{k-\gamma}h_1\in C^k[a,b] \quad\hbox{and}\quad {}^{RL}\!D_{a}^{\gamma}h_1 = D^k J_a^{k-\gamma}h_1.
\end{equation}
Taking into account the fact that $J_a^{1-\beta} h\in C^1[a,b]$ and $(J_a^{1-\beta} h)(a)=0$, by  Miller and Ross~\cite[Theorem 2(b), p. 59]{MillerRoss93}  and the semigroup property of the operator $J_a$ we have 
\begin{equation}\label{eqn.RL2}
J_a^{k-\gamma}h_1=J_a^{k-\gamma}D(J_a^{1-\beta}h)= DJ_a^{k-\gamma}(J_a^{1-\beta}h)
= D (J_a^{k+1-(\beta+\gamma)}h).
\end{equation}
Now, if $\lceil\beta+\gamma\rceil= k+1$ then from \eqref{eqn.RL1}--\eqref{eqn.RL2} it follows that 
$$
DJ_a^{\lceil\beta+\gamma\rceil-(\beta+\gamma)}h=DJ_a^{k+1-(\beta+\gamma)}h=J_a^{k-\gamma}h_1\in C^{k}[a,b],
$$
implying $D^{k+1}J_a^{\lceil\beta+\gamma\rceil-(\beta+\gamma)}h= D^kJ_a^{k-\gamma}h_1$,
 hence
${}^{RL}\!D_{a}^{\beta+\gamma}h={}^{RL}\!D_{a}^{\gamma}h_1$. 
If 
$\lceil\beta+\gamma\rceil= k$ then from \eqref{eqn.RL1}--\eqref{eqn.RL2} it follows that 
$$
J_a^{\lceil\beta+\gamma\rceil-(\beta+\gamma)}h = D (J_a^{k+1-(\beta+\gamma)}h) = J_a^{k-\gamma}h_1\in C^k[a,b],
$$
implying $D^kJ_a^{\lceil\beta+\gamma\rceil-(\beta+\gamma)}h = D^kJ_a^{k-\gamma}h_1$,
hence
${}^{RL}\!D_{a}^{\beta+\gamma}h={}^{RL}\!D_{a}^{\gamma}h_1$.

The theorem is proved.
   \end{proof}

     \begin{theorem} \label{thm.RL2}
     Let   $\alpha >1$ be arbitrary,  and $f\in C[a,b]$ be continuously Riemann-Liouville $\alpha$-differentiable. Then
     
       (i) For those $0\leq\beta<\alpha$ such that $\alpha-\beta\in\N$, the function $f$ is continuously Riemann-Liouville $\beta$-differentiable with $f_1:={}^{RL}\!D_{a}^{\beta}f\in C[a,b]$. The function $f_1$ is continuously Riemann-Liouville 
     $(\alpha-\beta)$-differentiable, and $D^{\alpha-\beta}f_1={}^{RL}\!D_{a}^{\alpha}f$; thus 
     $D^{\alpha-\beta}{}^{RL}\!D_{a}^{\beta}f = 
       {}^{RL}\!D_{a}^{\alpha}f$.
       
       (ii) For any $0\leq\beta< \alpha- \lfloor \alpha\rfloor$, the function $f$ is continuously Riemann-Liouville $\beta$-differentiable with $f_1:={}^{RL}\!D_{a}^{\beta}f\in C[a,b]$. 
     Moreover, $({}^{RL}\!D_{a}^{\beta}f)(a) = 0$; the function $f_1$ is continuously Riemann-Liouville 
     $\gamma$-differentiable for any $0<\gamma\leq \alpha- \lfloor \alpha\rfloor-\beta$, and 
     ${}^{RL}\!D_{a}^{\gamma}f_1={}^{RL}\!D_{a}^{\beta+\gamma}f$, thus 
     ${}^{RL}\!D_{a}^{\gamma}{}^{RL}\!D_{a}^{\beta}f={}^{RL}\!D_{a}^{\beta+\gamma}f$.
   
   \end{theorem}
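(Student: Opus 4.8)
The plan is to reduce all assertions to elementary facts about the single smooth function $g:=J_a^{m-\alpha}f$, where $m:=\lceil\alpha\rceil$ and $n:=\lfloor\alpha\rfloor$. Indeed, by Definition~\ref{def.RL}, saying that $f$ is continuously Riemann--Liouville $\alpha$-differentiable means precisely that $g\in C^m[a,b]$ and ${}^{RL}\!D_a^\alpha f=D^m g\in C[a,b]$. Since $m-\alpha\in[0,1)$, when $\alpha\notin\N$ we have $m-\alpha>0$ and hence $g(a)=0$; this vanishing boundary value is exactly what makes the boundary terms in Miller and Ross~\cite[Theorem 2(b), p.~59]{MillerRoss93} disappear, as in the proof of Theorem~\ref{thm.RL1}. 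Throughout I will use the semigroup property $J_a^\mu J_a^\nu=J_a^{\mu+\nu}$ repeatedly.

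For part (i), write $\alpha-\beta=j\in\N$, so that $\beta=\alpha-j$ with $1\le j\le n$. As $j$ is an integer, $\lceil\beta\rceil=\lceil\alpha\rceil-j=m-j$, and therefore the integral defining ${}^{RL}\!D_a^\beta f$ is $J_a^{\lceil\beta\rceil-\beta}f=J_a^{(m-j)-(\alpha-j)}f=J_a^{m-\alpha}f=g\in C^m[a,b]$. Hence $f$ is continuously Riemann--Liouville $\beta$-differentiable and $f_1:={}^{RL}\!D_a^\beta f=D^{m-j}g\in C^{j}[a,b]\subset C[a,b]$. Since $\alpha-\beta=j\in\N$ the operator ${}^{RL}\!D_a^{\alpha-\beta}$ coincides with $D^{j}$, and as $f_1\in C^{j}[a,b]$ we conclude $D^{\alpha-\beta}f_1=D^{j}D^{m-j}g=D^m g={}^{RL}\!D_a^\alpha f$; part (i) is thus pure integer-order calculus applied to $g$.

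For part (ii) I may assume $\alpha\notin\N$, since otherwise $\alpha-\lfloor\alpha\rfloor=0$ and the admissible range for $\beta$ is empty. Put $\alpha':=\alpha-n\in(0,1)$, so that $m=n+1$ and $g=J_a^{1-\alpha'}f\in C^{n+1}[a,b]$ with $g(a)=0$. The crucial step is the representation
\[
f=J_a^{\alpha'}(Dg),\qquad\text{whence}\qquad f(a)=0,
\]
which follows from $J_a^{\alpha'}g=J_a^{\alpha'}J_a^{1-\alpha'}f=J_a^{1}f$ by differentiating and applying \cite[Theorem 2(b), p.~59]{MillerRoss93} together with $g(a)=0$; this already disposes of the case $\beta=0$. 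For $0<\beta<\alpha'$ one has $\lceil\beta\rceil=1$ and $J_a^{1-\beta}f=J_a^{\alpha'-\beta}g$, and since $g\in C^1[a,b]$, $g(a)=0$ and $\alpha'-\beta>0$, Miller--Ross yields $D(J_a^{\alpha'-\beta}g)=J_a^{\alpha'-\beta}(Dg)\in C[a,b]$; thus $J_a^{1-\beta}f\in C^1[a,b]$, so $f$ is continuously Riemann--Liouville $\beta$-differentiable with $f_1=J_a^{\alpha'-\beta}(Dg)$ and $f_1(a)=0$. Finally, for $0<\gamma\le\alpha'-\beta$ both $\gamma$ and $\beta+\gamma$ lie in $(0,1)$, so their ceilings equal $1$, and the semigroup property gives the single identity $J_a^{1-\gamma}f_1=J_a^{\,1-\gamma+\alpha'-\beta}(Dg)=J_a^{1-(\beta+\gamma)}f$; writing $s:=1-\gamma+\alpha'-\beta\ge1$ and noting $Dg\in C^1[a,b]$ (because $n\ge1$ forces $g\in C^2[a,b]$), one last application of \cite[Theorem 2(b), p.~59]{MillerRoss93} shows $J_a^{s}(Dg)\in C^1[a,b]$, so $f_1$ is continuously Riemann--Liouville $\gamma$-differentiable and, applying $D$ to that identity, ${}^{RL}\!D_a^\gamma f_1={}^{RL}\!D_a^{\beta+\gamma}f$.

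I expect the routine parts to be the arithmetic of the ceilings and the bookkeeping of the integral orders. The one place demanding genuine care, and the main obstacle, is checking that every boundary term produced by \cite[Theorem 2(b)]{MillerRoss93} either vanishes or stays continuous: this rests on the two facts isolated above, namely $g(a)=0$ (so the term $(t-a)^{\alpha'-\beta-1}g(a)/\Gamma(\alpha'-\beta)$ drops out despite its negative exponent) and $s\ge1$ (so that $(t-a)^{s-1}$ remains continuous at $a$). The representation $f=J_a^{\alpha'}(Dg)$ is the device that organizes the entire argument for part (ii).
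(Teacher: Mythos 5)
Your proof is correct and takes essentially the same route as the paper: part (i) is the same integer-shift computation on $g:=J_a^{m-\alpha}f$, and part (ii) rests on the same ingredients (the Miller--Ross differentiation formula with the boundary term killed by $g(a)=0$, plus the semigroup property of $J_a$). The only difference is presentational: the paper obtains (ii) by citing Theorem~\ref{thm.RL1} for the fractional part $\alpha-\lfloor\alpha\rfloor$, whereas you inline that argument via the representation $f=J_a^{\alpha'}(Dg)$.
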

   \begin{proof}
(i) If $\alpha\in\N$ then $\beta\in\N$, hence (i) is clearly true due to the properties of the classical differentiation. Suppose that $\alpha\not\in\N$. 
Put $2\leq m:= \lceil \alpha\rceil\in\N$, $n:= \lceil \beta\rceil\in\N$. By assumption, $m > n$ and $m-\alpha=n-\beta$.
  Since $f\in C[a,b]$ is continuously Riemann-Liouville $\alpha$-differentiable we have $J_{a}^{m-\alpha}f\in C^m[a,b]$, hence $J_{a}^{n-\beta}f\in C^n[a,b]$. Consequently, $f$ is continuously Riemann-Liouville $\beta$-differentiable. Now using $J_{a}^{m-\alpha}f\in C^m[a,b]$, $\alpha-\beta=m-n$ and $f_1 = D^nJ_{a}^{n-\beta}f $ we get
  $$
  {}^{RL}\!D_{a}^{\alpha}f = D^mJ_{a}^{m-\alpha}f
  = D^{m-n}D^nJ_{a}^{m-\alpha}f = D^{m-n}D^nJ_{a}^{n-\beta}f
  = D^{\alpha-\beta}f_1.
  $$
  This means that $f_1$ is continuously Riemann-Liouville 
     $(\alpha-\beta)$-differentiable, and $D^{\alpha-\beta}f_1={}^{RL}\!D_{a}^{\alpha}f$.
     
(ii)   If $\alpha\in\N$ then $\alpha- \lfloor \alpha\rfloor=0$ and (ii) is obviously true. 
Suppose that $\alpha\not\in\N$. Put   $\alpha_1:= \alpha- \lfloor \alpha\rfloor$, $\alpha_2:= \beta+\gamma$. Then $0<\alpha_2< \alpha_1< 1$. By (i) above $f$ is continuously Riemann-Liouville $\alpha_1$-differentiable. By 
Theorem~\ref{thm.RL1}(i) the function $f$ is continuously Riemann-Liouville $\alpha_2$-differentiable. Applying Theorem~\ref{thm.RL1} to the function $f$, which is continuously Riemann-Liouville $\alpha_2$-differentiable with $0<\alpha_2<1$, we complete the proof of part (ii). 

The theorem is proved.
   \end{proof}

    \begin{proposition}\label{prp.RL3}
   (i)  Let  $\alpha\geq 1$, $0<\beta< \alpha$ be such that $\alpha-\beta\not\in\N$ and  $ \beta >  \alpha -\lfloor \alpha\rfloor$. Then there exists a function 
    $f\in C[a,b]$ such that $f$ is continuously Riemann-Liouville $\alpha$-differentiable with $({}^{RL}\!D_{a}^{\alpha}f)\in C[a,b]$ but $f$ is not continuously Riemann-Liouville $\beta$-differentiable. Moreover, if $\beta > (\alpha -\lfloor \alpha\rfloor)+1$ then $f$ is not Riemann-Liouville $\beta$-differentiable.
    
    (ii) Let  $\alpha>1$, $0\leq\beta<\alpha-\lfloor\alpha\rfloor < 1$ be arbitrary. 
    Then for any $\gamma > (\alpha-\lfloor\alpha\rfloor)- 
    \beta$, $(\beta+\gamma)-\alpha+\lfloor\alpha\rfloor\not\in\N$, there exists a continuous function $f\in C[a,b]$ such that $f$ is continuously Riemann-Liouville $\alpha$-differentiable and hence continuously Riemann-Liouville $\beta$-differentiable with derivative 
    $f_1:= {}^{RL}\!D_{a}^{\beta}f$ but the function $f_1\in C[a,b]$ is not continuously Riemann-Liouville $\gamma$-differentiable.
   \end{proposition}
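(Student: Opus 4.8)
The plan is to exhibit, in both parts, an explicit counterexample built from a single power function, for which every Riemann--Liouville integral and derivative can be computed in closed form. I will use only the elementary identity
\[
J_a^\mu(t-a)^p=\frac{\Gamma(p+1)}{\Gamma(p+\mu+1)}(t-a)^{p+\mu}\qquad(p>-1,\ \mu\ge0),
\]
together with the smoothness dichotomy: for real $q\ge0$, the function $(t-a)^q$ lies in $C^k[a,b]$ if and only if $q$ is a nonnegative integer or $q\ge k$; otherwise its $k$-th derivative carries the factor $(t-a)^{q-k}$, which blows up at $a$ and is integrable if and only if $q-k>-1$. Set $\alpha_0:=\alpha-\lfloor\alpha\rfloor$ and, in both parts, take $f(t):=(t-a)^{\alpha_0}$ (read as the constant $1$ when $\alpha\in\N$).

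For part (i), I would first check that $f$ is continuously Riemann--Liouville $\alpha$-differentiable with continuous derivative. With $m:=\lceil\alpha\rceil$ one has $m-\alpha=1-\alpha_0$ for $\alpha\notin\N$, so $J_a^{m-\alpha}f=c\,(t-a)^{\alpha_0+(m-\alpha)}=c\,(t-a)$ is affine and $({}^{RL}\!D_{a}^{\alpha}f)=D^m J_a^{m-\alpha}f=0\in C[a,b]$ (the case $\alpha\in\N$ is trivial with $f\equiv1$). Next, with $n:=\lceil\beta\rceil$, compute $J_a^{n-\beta}f=c'\,(t-a)^{q_\beta}$, where $q_\beta:=\alpha_0+n-\beta$. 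The hypothesis $\beta>\alpha_0$ gives $q_\beta<n$, and since $q_\beta-n=\alpha_0-\beta\equiv\alpha-\beta\pmod 1$ is non-integer (because $\alpha-\beta>0$ and $\alpha-\beta\notin\N$), the dichotomy shows $J_a^{n-\beta}f\notin C^n[a,b]$; hence $f$ is not continuously Riemann--Liouville $\beta$-differentiable. For the last assertion, $D^n J_a^{n-\beta}f$ carries the factor $(t-a)^{q_\beta-n}=(t-a)^{\alpha_0-\beta}$, which fails to be integrable precisely when $\alpha_0-\beta<-1$, i.e. $\beta>(\alpha-\lfloor\alpha\rfloor)+1$, so in that regime $f$ is not even Riemann--Liouville $\beta$-differentiable.

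For part (ii), note that $\beta<\alpha_0<1$ forces $\alpha\notin\N$. As above $f$ is continuously Riemann--Liouville $\alpha$-differentiable, hence by Theorem~\ref{thm.RL2}(ii) it is continuously Riemann--Liouville $\beta$-differentiable, and the power rule yields $f_1:={}^{RL}\!D_{a}^{\beta}f=c''\,(t-a)^{\alpha_0-\beta}$ with $\alpha_0-\beta\in(0,1)$, so $f_1\in C[a,b]$. Repeating the exponent analysis with $k:=\lceil\gamma\rceil$, I compute $J_a^{k-\gamma}f_1=c'''\,(t-a)^{(\alpha_0-\beta)+k-\gamma}$; the hypothesis $\gamma>\alpha_0-\beta$ makes this exponent strictly less than $k$, while $(\beta+\gamma)-\alpha_0>0$ together with $(\beta+\gamma)-\alpha_0\notin\N$ makes it non-integer. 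By the dichotomy $J_a^{k-\gamma}f_1\notin C^k[a,b]$, so $f_1$ is not continuously Riemann--Liouville $\gamma$-differentiable, as claimed.

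The routine part is the closed-form computation; the delicate part is purely the bookkeeping of exponents, namely matching each hypothesis to the precise inequality it forces: $\beta>\alpha_0$ (resp. $\gamma>\alpha_0-\beta$) pushes the relevant exponent below the integer threshold, while $\alpha-\beta\notin\N$ (resp. $(\beta+\gamma)-\alpha_0\notin\N$) rules out the accidentally polynomial exponent that would restore smoothness. The one genuinely subtle point is the non-$L_1$ claim in (i): I must confirm that $D^n J_a^{n-\beta}f$ equals the stated power up to a nonzero constant with no spurious lower-order contributions, so that non-integrability of $(t-a)^{\alpha_0-\beta}$ does obstruct $L_1$ $\beta$-differentiability. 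I would also verify the degenerate cases separately---$\beta=0$ (where $f_1=f$) and integral orders of integration $n-\beta=0$ or $k-\gamma=0$---where the closed forms simplify but the conclusions persist.
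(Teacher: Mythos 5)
Your proposal is correct and follows essentially the same route as the paper: the paper also takes the single power function $f=(t-a)^{\alpha-\lfloor\alpha\rfloor}$ (constant when $\alpha\in\N$), verifies continuous Riemann--Liouville $\alpha$-differentiability via $J_a^{\lceil\alpha\rceil-\alpha}f=\Gamma(c+1)(t-a)$, and then reads off the failure of $\beta$- (resp.\ $\gamma$-) differentiability from the closed-form power rule for ${}^{RL}\!D_a^{\beta}(t-a)^c$, exactly your exponent bookkeeping. The only cosmetic difference is that you rederive the power-rule computations from the identity for $J_a^\mu(t-a)^p$ where the paper cites Diethelm's Examples 2.1 and 2.4.
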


\begin{proof}
(i) Put  $c:= \alpha-\lfloor \alpha\rfloor$. Then  $0\leq c<1$.
We choose $f:=(t-a)^c\in C[a,b]$.
By Diethelm~\cite[Example 2.1, p. 20]{Kai}, in case $c>0$ we have
   $$
   J_a^{\lceil \alpha\rceil-\alpha}f = J_a^{\lceil \alpha\rceil-\alpha}(t-a)^c= (t-a)\Gamma(c+1)\in C^\infty[a,b];
   $$
   and in case $c=0$ we have $J_a^{\lceil \alpha\rceil-\alpha}f \equiv 1\in C^\infty[a,b]$.
   Hence $f$ is continuously Riemann-Liouville $\alpha$-differentiable with $({}^{RL}\!D_{a}^{\alpha}f)\in C[a,b]$ for any $\alpha>0$.
   
   From assumption we have $\beta-c\not\in\N$, hence by Diethelm~\cite[Example 2.4, p. 28]{Kai} we have 
  $$
  {}^{RL}\!D_{a}^{\beta}f = 
   \frac{\Gamma(c+1)}{\Gamma(c+1-\beta)}(t-a)^{c-\beta}. 
   $$
   Furthermore, we have $\beta>c$, hence ${}^{RL}\!D_{a}^{\beta}f\not\in C[a,b]$. Consequently,  $f$ is not continuously Riemann-Liouville $\beta$-differentiable.
   
   Now, suppose that $\beta > (\alpha -\lfloor \alpha\rfloor)+1$, then $\beta-c>1$. In this case we have 
   ${}^{RL}\!D_{a}^{\beta}f\not\in L_1[a,b]$. Consequently,  $f$ is not Riemann-Liouville $\beta$-differentiable.
   
   (ii) If $\alpha\in\N$ then there is no $\beta$ and (ii) is true. 
   
   Suppose that $\alpha\not\in\N$.
  We use the same function $f$ as (i) above. By Diethelm~\cite[Example 2.4, p. 28]{Kai},  since  $(\beta+\gamma)-c\not\in\N$  we have
   $$
   {}^{RL}\!D_{a}^{\gamma} f_1={}^{RL}\!D_{a}^{\gamma}({}^{RL}\!D_{a}^{\beta}f)= 
   \frac{\Gamma(c+1)}{\Gamma(c+1-(\beta+\gamma))}(t-a)^{c-(\beta+\gamma)}.
   $$
   Since $c-(\beta+\gamma)<0$ we have ${}^{RL}\!D_{a}^{\gamma} f_1\not\in C[a,b]$ and (ii) is proved. 
    \end{proof}
   
    \begin{remark}\label{rem.RL}
   (i) In the proof of Theorem~\ref{thm.RL1} we need to use  essentially the assumption that $f\in C[a,b]$ is continuously Riemann-Liouville $\alpha$-differentiable. Note that this assumption implies that $f(a)=0$ (use  $\beta=0$ in  part (i) of the theorem). \\
  (ii) Theorem 2.13 in Diethelm~\cite{Kai} is similar to Theorem~\ref{thm.RL1}, but it requires a stronger smoothness condition on $f$, and due to the smoothness of $f$ it holds also for $\alpha>1$. As noted by Diethelm, without continuity assumption and zero condition $f(a)=0$ the semigroup property of the Riemann-Liouville differential operators  may not hold, 
   see Diethelm~\cite[Examples 2.6, 2.7, p. 30]{Kai}.\\
   (iii) Theorems \ref{thm.RL1}--\ref{thm.RL2} and Proposition~\ref{prp.RL3} provide us with a complete answer to the question of whether a continuous Riemann-Liouville $\alpha$-differentiable $f$ is always  continuous Riemann-Liouville $\beta$-differentiable for $\beta<\alpha$, and 
   provide us with a complete description of partial semigroup property of the Riemann-Liouville fractional differential operators.
      \end{remark}

        \subsection{Semigroup property of Caputo fractional differential operators}
   Since Caputo differentiation is defined via Riemann-Liouville differentiation it is not difficult to adapt the result of Subsection~\ref{subsec.RL} above to the case of the Caputo fractional differentiation. However, the Caputo differentiation has its distinctive feature what allows us to prove partial semigroup property of the Caputo fractional differential operators ${}^{C}\!D_{a}^{\alpha}f$, $0\leq\alpha\in\R$, for a bigger set of the parameter $\alpha$ than the Riemann-Liouville case.
   
   First we use Theorem~\ref{thm.RL1} to prove a similar result for Caputo fractional differentiation.
   
     \begin{proposition}\label{prp.Cap1}
   Let $0 <\beta<\alpha < 1$ be arbitrary, $f\in C[a,b]$ be continuously Caputo $\alpha$-differentiable. Then 
   
   (i) $f$ is continuously $\beta$-differentiable with continuous derivative ${}^{C}\!D_{a}^{\beta}f\in C[a,b]$. Moreover,  $({}^{C}\!D_{a}^{\beta}f)(a) = 0$.
   
 (ii)  The continuous fractional derivative $f_1:={}^{C}\!D_{a}^{\beta}f\in C[a,b]$ is continuously Caputo 
 $(\alpha-\beta)$-differentiable. Moreover, ${}^{C}\!D_{a}^{\alpha-\beta}f_1= {}^{C}\!D_{a}^{\alpha}f$, thus
 ${}^{C}\!D_{a}^{\alpha-\beta}{}^{C}\!D_{a}^{\beta}f = {}^{C}\!D_{a}^{\alpha}f$.  
   \end{proposition}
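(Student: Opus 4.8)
The plan is to reduce everything to the Riemann-Liouville situation already settled in Theorem~\ref{thm.RL1}. Since $0<\alpha<1$ we have $\lceil\alpha\rceil=1$, so the Taylor polynomial $T_0[f;a]$ is just the constant $f(a)$, and by definition ${}^{C}\!D_{a}^{\alpha}f={}^{RL}\!D_{a}^{\alpha}(f-f(a))$. First I would set $g:=f-f(a)$, observing that $g\in C[a,b]$ with $g(a)=0$, and that ${}^{RL}\!D_{a}^{\alpha}g={}^{C}\!D_{a}^{\alpha}f\in C[a,b]$. Thus $g$ is continuously Riemann-Liouville $\alpha$-differentiable, so Theorem~\ref{thm.RL1} is available for $g$ at every exponent in $[0,\alpha)$.

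For part (i), with $0<\beta<1$ the same observation gives ${}^{C}\!D_{a}^{\beta}f={}^{RL}\!D_{a}^{\beta}(f-f(a))={}^{RL}\!D_{a}^{\beta}g$. Applying Theorem~\ref{thm.RL1}(i) to $g$ then yields that $g$, and hence $f$, is continuously Riemann-Liouville $\beta$-differentiable with ${}^{RL}\!D_{a}^{\beta}g\in C[a,b]$ and $({}^{RL}\!D_{a}^{\beta}g)(a)=0$. Rewriting in Caputo notation gives ${}^{C}\!D_{a}^{\beta}f\in C[a,b]$ and $({}^{C}\!D_{a}^{\beta}f)(a)=0$, which is exactly the assertion of (i).

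For part (ii), set $f_1:={}^{C}\!D_{a}^{\beta}f={}^{RL}\!D_{a}^{\beta}g$. The key point---the one that makes the Caputo case go through cleanly---is that $f_1(a)=0$ by part (i). Since $0<\alpha-\beta<1$, the Caputo $(\alpha-\beta)$-derivative of $f_1$ is ${}^{RL}\!D_{a}^{\alpha-\beta}(f_1-f_1(a))={}^{RL}\!D_{a}^{\alpha-\beta}f_1$, so the subtraction of the Taylor polynomial is vacuous and the Caputo and Riemann-Liouville $(\alpha-\beta)$-derivatives of $f_1$ coincide. Now Theorem~\ref{thm.RL1}(ii) applied to $g$ tells us that $f_1={}^{RL}\!D_{a}^{\beta}g$ is continuously Riemann-Liouville $(\alpha-\beta)$-differentiable with ${}^{RL}\!D_{a}^{\alpha-\beta}f_1={}^{RL}\!D_{a}^{\alpha}g$. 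Chaining these identities gives
$$
{}^{C}\!D_{a}^{\alpha-\beta}f_1={}^{RL}\!D_{a}^{\alpha-\beta}f_1={}^{RL}\!D_{a}^{\alpha}g={}^{C}\!D_{a}^{\alpha}f,
$$
establishing both the continuous Caputo $(\alpha-\beta)$-differentiability of $f_1$ and the semigroup identity of (ii).

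I do not anticipate a serious obstacle: once the reduction $g=f-f(a)$ is in place, everything is transcribed from Theorem~\ref{thm.RL1}. The only thing requiring care is the bookkeeping of Taylor polynomials---specifically, checking that $f_1(a)=0$ so that the Caputo and Riemann-Liouville $(\alpha-\beta)$-derivatives of the intermediate function $f_1$ genuinely agree. That vanishing-at-$a$ property, inherited from part (i), is precisely what lets the Riemann-Liouville result be carried over \emph{verbatim} into the Caputo setting.
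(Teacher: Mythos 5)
Your proposal is correct and follows essentially the same route as the paper: both reduce to the Riemann--Liouville case via $g:=f-f(a)$, invoke Theorem~\ref{thm.RL1}(i)--(ii) for $g$, and use the vanishing $f_1(a)=0$ to identify the Caputo and Riemann--Liouville $(\alpha-\beta)$-derivatives of $f_1$. No gaps.
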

   \begin{proof}
(i) Put $g:=f-f(a)$, then $g\in C[a,b]$. Since $0<\beta<\alpha < 1$ by definition 
 ${}^{C}\!D_{a}^{\alpha}f := {}^{RL}\!D_{a}^{\alpha}g$.
Therefore $g$ is continuously Riemann-Liouville $\alpha$-differentiable. By Theorem~\ref{thm.RL1}, $g$ is continuously Riemann-Liouville $\beta$-differentiable, and $({}^{RL}\!D_{a}^{\beta}g)(a) = 0$. Consequently, $f$ is continuously Caputo $\beta$-differentiable, ${}^{C}\!D_{a}^{\beta}f={}^{RL}\!D_{a}^{\beta}g$
 and $({}^{C}\!D_{a}^{\beta}f)(a)=({}^{RL}\!D_{a}^{\beta}g)(a)=0$.

(ii) For $f_1={}^{C}\!D_{a}^{\beta}f\in C[a,b]$, by (i) above we have $f_1(a) = ({}^{C}\!D_{a}^{\beta}f)(a)=0$, and also $f_1={}^{RL}\!D_{a}^{\beta}g$. Since $g$ is continuously Riemann-Liouville $\alpha$-differentiable, by Theorem~\ref{thm.RL1}, $f_1$ is continuously Riemann-Liouville
 $(\alpha-\beta)$-differentiable and 
 ${}^{RL}\!D_{a}^{\alpha-\beta}f_1 = {}^{RL}\!D_{a}^{\alpha}g$.
 Consequently, since ${}^{RL}\!D_{a}^{\alpha-\beta}f_1= {}^{C}\!D_{a}^{\alpha-\beta}f_1$ because $0<\alpha-\beta<1$ and $f_1(a)=0$, taking account the equality   ${}^{RL}\!D_{a}^{\alpha}g={}^{C}\!D_{a}^{\alpha}f $ we get 
 ${}^{C}\!D_{a}^{\alpha-\beta}f_1= {}^{C}\!D_{a}^{\alpha}f$.
 \end{proof}

Unlike the Riemann-Liouville case where the semigroup property is lost as soon as $\alpha>1$ as shown in Proposition~\ref{prp.RL3}, in the Caputo case we still have some limited semigroup property of the operator of  Caputo continuous fractional differentiation for $\alpha>1$ as asserted in the theorems below. To derive the theorem we will use the results by Vainikko~\cite{Vainikko2016}. 

    Recall that for a smooth function $f\in C^m[a,b]$, $m\in\N$, we denote by $T_m[f;a]:= \sum_{k=0}^m \frac{(D^kf)(a)}{k!}(t-a)^k$ its Taylor polynomial of order $m$ centered at $a$. 
   For $m\in\N$ we put
  $$
  C_0^m[a,b]:= \{ v\in C^m[a,b]: v^k(0)=0, k=0,1,\ldots, m-1\}.
  $$
   It is easily seen that  for $m\in\N$ the range of the Riemann-Liouville fractional integral operator $J_a^m$, in the continuous setting, is
   $$
   J_a^m C[a,b] = \{ v\in C^m[a,b]: v^k(0)=0, k=0,1,\ldots, m-1\} = C_0^m[a,b].
   $$
We denote by $D_0^\alpha$, $0\leq \alpha\in\R$, the inverse of the Riemann-Liouville fractional integral operator $J_a^\alpha$, i.e. 
$$
D_0^\alpha v = (J_a^\alpha)^{-1} v, \quad\hbox{for}\quad v\in J_a^\alpha C[a,b];
$$
i.e., $D_0^\alpha$ is defined on $J_a^\alpha C[a,b]$, and for $v\in J_a^\alpha C[a,b]$, hence exists $u\in C[a,b]$ such that $J_a^\alpha u=v$, we set $D_0^\alpha v=u$. Clearly the range of $D_0^\alpha$ is a subset of $C[a,b]$.
Using the semigroup property of $J_a^\alpha$ one can prove that the domain of $D_0^\alpha$ is a subset of $C_0^{\lfloor\alpha\rfloor}[a,b] \subset C[a,b]$; and the operators $D_0^\alpha$, $0\leq \alpha\in\R$, has semigroup property with respect to the parameter $\alpha$, like the classical differential operators $D^n$, $n\in\N$, does. For more details on the operator $D_0^\alpha$ and its properties as well as its relation to Caputo and Riemann-Liouville derivatives we refer the reader to Vainikko~\cite{Vainikko2016}.

       \begin{theorem}\label{thm.Cap2}
  Let $\alpha>0$,  $\lceil \alpha\rceil -1\leq\beta< \alpha$ be arbitrary, $f\in C^{\lceil \alpha\rceil -1}[a,b]$ be continuously Caputo $\alpha$-differentiable. Then 
  
   (i) $f$ is continuously $\beta$-differentiable with continuous derivative ${}^{C}\!D_{a}^{\beta}f\in C[a,b]$. Moreover, if $\beta\not\in\N$ then $({}^{C}\!D_{a}^{\beta}f)(a) = 0$.
   
 (ii)  The continuous fractional derivative $f_1:={}^{C}\!D_{a}^{\beta}f\in C[a,b]$ is continuously Caputo 
 $(\alpha-\beta)$-differentiable. Moreover, ${}^{C}\!D_{a}^{\alpha-\beta}f_1= {}^{C}\!D_{a}^{\alpha}f$, thus
 ${}^{C}\!D_{a}^{\alpha-\beta}{}^{C}\!D_{a}^{\beta}f = {}^{C}\!D_{a}^{\alpha}f$.  
   \end{theorem}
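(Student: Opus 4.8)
The plan is to reduce everything to the single representation $f-T_{m-1}[f;a]=J_a^\alpha u$, where $u:={}^{C}\!D_{a}^{\alpha}f$, and then to read off both assertions from the semigroup property of $J_a$ (equivalently, of Vainikko's operator $D_0^\alpha$). Throughout put $m:=\lceil\alpha\rceil$ and $g:=f-T_{m-1}[f;a]$. Since $f\in C^{m-1}[a,b]$ and $T_{m-1}[f;a]$ matches the derivatives of $f$ at $a$ up to order $m-1$, we have $g\in C_0^{m-1}[a,b]$, i.e. $D^kg(a)=0$ for $k=0,\dots,m-1$, and by Definition~\ref{def.Cap} the hypothesis gives $u={}^{C}\!D_{a}^{\alpha}f={}^{RL}\!D_{a}^{\alpha}g=D^mJ_a^{m-\alpha}g\in C[a,b]$, so in particular $v:=J_a^{m-\alpha}g\in C^m[a,b]$.

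First I would establish the representation. Applying $J_a^m$ to $D^mv=u$ and using Taylor's formula $J_a^mD^mv=v-T_{m-1}[v;a]$, it remains to check that $T_{m-1}[v;a]=0$. This is where $g\in C_0^{m-1}[a,b]$ is used: commuting each derivative through the integral by Miller and Ross~\cite[Theorem 2(b), p.~59]{MillerRoss93}, the vanishing of $g$ and its derivatives at $a$ yields $D^kv=J_a^{m-\alpha}D^kg$ for $k\le m-1$, whence $D^kv(a)=0$ for $k=0,\dots,m-1$ and $T_{m-1}[v;a]=0$. Thus $J_a^{m-\alpha}g=J_a^mu=J_a^{m-\alpha}J_a^\alpha u$ by the semigroup property of $J_a$, and injectivity of $J_a^{m-\alpha}$ gives $g=J_a^\alpha u$. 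In particular $g\in J_a^\alpha C[a,b]$ and $D_0^\alpha g=u$.

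Next I would prove (i). From $g=J_a^\alpha u=J_a^\beta(J_a^{\alpha-\beta}u)$ and the semigroup property of $J_a$ (equivalently $D_0^\beta J_a^\beta=\id$) we get $D_0^\beta g=J_a^{\alpha-\beta}u\in C[a,b]$. If $\beta\notin\N$ then $m-1<\beta<\alpha\le m$ forces $\lceil\beta\rceil=m$, so the Taylor polynomial subtracted in the Caputo derivative is again $T_{m-1}[f;a]$; hence ${}^{C}\!D_{a}^{\beta}f={}^{RL}\!D_{a}^{\beta}g$, and since $g$ lies in the range of $J_a^\beta$ one has ${}^{RL}\!D_{a}^{\beta}g=D_0^\beta g=J_a^{\alpha-\beta}u$, which is continuous and vanishes at $a$ because $\alpha-\beta>0$. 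If $\beta=m-1\in\N$ then directly ${}^{C}\!D_{a}^{m-1}f=D^{m-1}f\in C[a,b]$ (and no boundary condition is claimed in this case). Either way $f$ is continuously Caputo $\beta$-differentiable, proving (i).

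Finally, for (ii) set $f_1:={}^{C}\!D_{a}^{\beta}f$ and $\mu:=\alpha-\beta$, and note $\mu\in(0,1]$ since $m-1\le\beta<\alpha\le m$, so $\lceil\mu\rceil=1$. In both cases above one checks $f_1-f_1(a)=J_a^\mu u$: for $\beta\notin\N$ this is $f_1=J_a^{\alpha-\beta}u$ with $f_1(a)=0$, while for $\beta=m-1$ one has $D^{m-1}f-D^{m-1}f(a)=D^{m-1}g=D^{m-1}J_a^{m-1}(J_a^{\mu}u)=J_a^{\mu}u$. Therefore, since $\lceil\mu\rceil=1$,
\[
{}^{C}\!D_{a}^{\mu}f_1={}^{RL}\!D_{a}^{\mu}\bigl(f_1-f_1(a)\bigr)={}^{RL}\!D_{a}^{\mu}\bigl(J_a^{\mu}u\bigr)=u={}^{C}\!D_{a}^{\alpha}f,
\]
which is continuous; hence $f_1$ is continuously Caputo $(\alpha-\beta)$-differentiable and the stated identity holds. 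The main obstacle is the representation step $g=J_a^\alpha u$: it is the only place that genuinely uses continuous Caputo $\alpha$-differentiability, and it requires both the Miller--Ross commutation (to kill the Taylor polynomial of $J_a^{m-\alpha}g$) and the injectivity of $J_a^{m-\alpha}$ to invert; once it is in hand, (i) and (ii) are pure semigroup bookkeeping.
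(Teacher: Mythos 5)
Your proof is correct, but it runs on different machinery than the paper's. The paper's argument is organized around Vainikko's characterization results: it invokes Vainikko's Proposition 5.1 to translate continuous Caputo $\alpha$-differentiability of $f$ into $D_0^\alpha$-differentiability of $f-T_{m-1}[f;a]$, uses the abstract semigroup property of the operators $D_0^r$, and then needs Vainikko's Theorem 5.2 (the limit $\lim_{t\to a}(t-a)^{m-1-\beta}(D^{m-1}f(t)-D^{m-1}f(a))$ and the explicit singular-integral formula for the derivative) to obtain $({}^{C}\!D_{a}^{\beta}f)(a)=0$ and to handle the case $\beta=m-1\in\N$, with Diethelm's Lemmas 3.11 and 3.13 covering the cases $\alpha\in\N$; altogether part (ii) splits into four cases. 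You instead prove the single representation $f-T_{m-1}[f;a]=J_a^\alpha\bigl({}^{C}\!D_{a}^{\alpha}f\bigr)$ from scratch (iterated Miller--Ross commutation to kill the Taylor polynomial of $J_a^{m-\alpha}g$, Taylor's formula with integral remainder, and injectivity of $J_a^{m-\alpha}$ --- a standard fact you should perhaps note follows from $J_a^{1-(m-\alpha)}J_a^{m-\alpha}h=J_a^1h$), which is logically the relevant direction of Vainikko's Proposition 5.1; after that, everything is $J_a$-semigroup bookkeeping. What your route buys is uniformity and self-containedness: the boundary value $({}^{C}\!D_{a}^{\beta}f)(a)=0$ falls out immediately from $f_1=J_a^{\alpha-\beta}u$ with $\alpha-\beta>0$ rather than from a limit computation, the integer and non-integer cases of $\alpha$ and $\beta$ are treated by one argument with only the dichotomy on $\beta$ needed to identify $f_1$, and no external characterization theorems are cited. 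What the paper's route buys is that, once Vainikko's results are accepted, the transfer to $D_0^\alpha$ is a one-line quotation and the same framework is reused verbatim in Theorems~\ref{thm.Cap3} and \ref{thm.Cap4}. Both proofs are sound; yours is arguably the cleaner proof of this particular theorem.
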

   \begin{proof}
(i)   Put $m:= \lceil \alpha\rceil$. Then $m\geq 1$,  $f\in C^{m-1}[a,b]$. 
  Put $g:=T_{m-1}[f;a] =\sum_{k=0}^{m-1}\frac{(D^kf)(a)}{k!}(t-a)^k$. 
        
 1. First we consider the case $\alpha\not\in\N$.  By Vainikko~\cite[Proposition 5.1]{Vainikko2016}, $f$ is continuously Caputo $\alpha$-differentiable if and only if $f-g$ is $D_0^\alpha$-differentiable. This implies that $f-g$ is $D_0^\beta$-differentiable because $0<\beta<\alpha$ and the operators $D_0^r$, $r>0$, has semigroup property with respect to the parameter $r$.  Then, again by virtue of  Vainikko~\cite[Proposition 5.1]{Vainikko2016} we get that $f$ is continuously Caputo $\beta$-differentiable.  
By Vainikko~\cite[Theorem 5.2]{Vainikko2016}, since $f\in C^{m-1}[a,b]$ is continuously Caputo $\alpha$-differentiable, there exists a finite limit
\begin{equation}\label{eqn.Cap1}
\lim_{t\to a} (t-a)^{m-1-\alpha}((D^{m-1}f)(t)- (D^{m-1}f)(a))=:\gamma_m,
\end{equation}
and $({}^{C}\!D_{a}^{\alpha}f)(a)= \Gamma(\alpha+2-m)\gamma_m$. Similarly, if $\beta\not\in\N$ there exists a finite limit
\begin{equation}\label{eqn.Cap2}
\lim_{t\to a} (t-a)^{m-1-\beta}((D^{m-1}f)(t)- (D^{m-1}f)(a))=:\gamma_m',
\end{equation}
and $({}^{C}\!D_{a}^{\beta}f)(a)= \Gamma(\beta+2-m)\gamma_m'$. Since $\alpha-\beta>0$, from \eqref{eqn.Cap1}-\eqref{eqn.Cap2} we get
\begin{eqnarray*}
\gamma_m' &=&   \lim_{t\to a} (t-a)^{m-1-\beta}((D^{m-1}f)(t)- (D^{m-1}f)(a))\\
 &=&  \lim_{t\to a}(t-a)^{\alpha-\beta} (t-a)^{m-1-\alpha}((D^{m-1}f)(t)- (D^{m-1}f)(a))\\
 &=&0\times \gamma_m=0.
   \end{eqnarray*}
   Thus $({}^{C}\!D_{a}^{\beta}f)(a)=0$ if $\beta\not\in\N$. 
   
   If $\beta\in\N$ then $\beta= m-1$ and clearly $f\in C^\beta[a,b]$, hence is continuously $\beta$-differentiable.
   
   2. Now we consider the case $\alpha\in\N$. We have $\alpha=\lceil\alpha\rceil=m$ and $m-1\leq\beta<m$. Since $f$ is continuously Caputo $\alpha$-differentiable,  $f\in C^m[a,b]$. Therefore, by Diethelm~\cite[Lemma 3.11, p. 56]{Kai}, $f$ is continuously Caputo $\beta$-differentiable with continuous derivative $({}^{C}\!D_{a}^{\beta}f)\in C[a,b]$ and, moreover,  $({}^{C}\!D_{a}^{\beta}f)(a) = 0$ in case $\beta\not\in\N$.
   If $\beta\in\N$ then $\beta = m-1$ and clearly $f\in C^\beta[a,b]$, hence is continuously $\beta$-differentiable.
Part (i) of the theorem is proved.
 
 (ii) 1. First we consider the case $\alpha\not\in\N$ and $\beta\not\in\N$, thus $\lceil\alpha\rceil=m$ and  $\beta>m-1$. 
 Put ${\hat f}:=f-g$, where  $g = T_{m-1}[f;a]= \sum_{k=0}^{m-1}\frac{(D^kf)(a)}{k!}(t-a)^k$.
 Since $f$ is continuously Caputo $\alpha$-differentiable, by 
Vainikko~\cite[Proposition 5.1]{Vainikko2016} the function ${\hat f}=f-g$ is $D_0^\alpha$-differentiable and $D_0^{\alpha}{\hat f} = {}^{C}\!D_{a}^{\alpha}f$.
Using semigroup property of the operators $D_0^r$, $r>0$, with respect to the parameter $r$, since $0 < \beta <\alpha$ the function $\hat f$ is $D_0^{\beta}$-differentiable and $D_0^\beta{\hat f}$ is $D_0^{\alpha-\beta}$-differentiable,  furthermore
\begin{equation}\label{eqn.Cap3}
D_0^{\alpha-\beta}(D_0^\beta{\hat f})= D_0^{\alpha}{\hat f} = {}^{C}\!D_{a}^{\alpha}f.
\end{equation}

Due to part (i) above $f$ is continuously Caputo $\beta$-differentiable, hence %by 
the function ${\hat f}=f-g$ is $D_0^\beta$-differentiable with
derivative $D_0^\beta(f-g)\in C[a,b]$ and ${}^{C}\!D_{a}^{\beta}f = D_0^\beta(f-g)$, that is 
\begin{equation}\label{eqn.Cap4}
f_1=D_0^\beta{\hat f},
\end{equation}
and by \eqref{eqn.Cap3} the  continuous fractional derivative $f_1={}^{C}\!D_{a}^{\beta}f$ is continuously  
 $D_0^{\alpha-\beta}$-differentiable. 
Since $0<\alpha-\beta<1$, by Vainikko~\cite[Proposition 5.1]{Vainikko2016} we have 
\begin{equation}\label{eqn.Cap5}
D_0^{\alpha-\beta}(D_0^\beta{\hat f}) = {}^{C}\!D_a^{\alpha-\beta}((D_0^\beta{\hat f}) - (D_0^\beta{\hat f})(a)).
\end{equation}

Since  $f$ is continuously Caputo $\beta$-differentiable the function ${\hat f}=f-g$ is continuously Caputo $\beta$-differentiable, because $g$ is a polynomial of degree not greater than $\lceil\beta\rceil -1$ hence  ${}^{C}\!D_{a}^{\beta}g =0$. 
Therefore, by Vainikko~\cite[Proposition 5.1]{Vainikko2016},  the function ${\hat f}- T_{m-1}[{\hat f};a]$ has continuous derivative $D_0^\beta({\hat f}- T_{m-1}[{\hat f};a])\in C[a,b]$ and ${}^{C}\!D_{a}^{\beta}{\hat f} = D_0^\beta({\hat f}- T_{m-1}[{\hat f};a])$. Moreover, since $\beta\not\in\N$, 
${}^{C}\!D_{a}^{\beta}{\hat f} (a)=0$. Since $g = \sum_{k=0}^{m-1}\frac{(D^kf)(a)}{k!}(t-a)^k$ we have $T_{m -1}[{\hat f};a] = 0$, hence  
\begin{equation}\label{eqn.Cap6}
{}^{C}\!D_{a}^{\beta}{\hat f} = D_0^\beta{\hat f}\quad\hbox{and}\quad
D_0^\beta{\hat f}(a) ={}^{C}\!D_{a}^{\beta}{\hat f} (a)=0.
\end{equation}

From \eqref{eqn.Cap4}--\eqref{eqn.Cap6} it follows that the function $f_1$ is continuously Caputo $(\alpha-\beta)$-differentiable; and moreover, by virtue of \eqref{eqn.Cap3} we have
 $$
 {}^{C}\!D_{a}^{\alpha}f = D_0^{\alpha-\beta}(D_0^\beta{\hat f})
 = {}^{C}\!D_a^{\alpha-\beta}f_1.
 $$

2. The case $\alpha\not\in \N$ and $\beta\in\N$, thus $\beta=m-1<\alpha<m$. Put $f_2:= D^{m-1}f$, then we have $f_2\in C[a,b]$ and $f_2 = {}^{C}\!D_{a}^{\beta}f$. By Vainikko~\cite[Theorem 5.2]{Vainikko2016} the continuous Caputo derivative of order $\alpha$ of $f$ is given by 
\begin{eqnarray*}
{}^{C}\!D_{a}^{\alpha}f (t) &=& \frac{1}{\Gamma(m-\alpha)}\Big( (t-a)^{m-1-\alpha}(f_2(t)-f_2(a))\\
 &&+\; (\alpha-m+1)\int_a^t (t-s)^{m-\alpha-2}(f_2(t)-f_2(s)) ds\Big), \quad a\leq t\leq b.
 \end{eqnarray*}
We have $0<\alpha-\beta=\alpha-m+1<1$ and $\lceil\alpha-\beta\rceil=1$.  Applying Vainikko~\cite[Theorem 5.2]{Vainikko2016} to the function $f_2$ we get that $f_2$ is continuously Caputo $(\alpha-\beta)$-differentiable and the Caputo derivative of order  $(\alpha-\beta)$ of  $f_2$ is given by
\begin{eqnarray*}
{}^{C}\!D_{a}^{\alpha-\beta}f _2(t) &=& 
\frac{1}{\Gamma(m-\alpha)}\Big( (t-a)^{m-1-\alpha}(f_2(t)-f_2(a))\\
 &&+\; (\alpha-m+1)\int_a^t (t-s)^{m-\alpha-2}(f_2(t)-f_2(s)) ds\Big), \quad a\leq t\leq b.
 \end{eqnarray*}
Consequently, ${}^{C}\!D_{a}^{\alpha-\beta}{}^{C}\!D_{a}^{\beta}f ={}^{C}\!D_{a}^{\alpha-\beta}f _2
= {}^{C}\!D_{a}^{\alpha}f$.

3. The case $\alpha\in \N$ and $\beta\not\in\N$, thus $\alpha=m$ and $\lceil\beta\rceil=m$. In this case $f\in C^m[a,b]$, and by  Diethelm~\cite[Lemma 3.13, p. 56]{Kai} we have 
${}^{C}\!D_{a}^{\alpha-\beta}{}^{C}\!D_{a}^{\beta}f = {}^{C}\!D_{a}^{\alpha}f$. 

4. The case $\alpha\in\N$ and $\beta\in\N$, thus $\alpha=m$ and $\beta=m-1$. In this case the Caputo differentiation coincide with the classical differentiation. Since $f$ is continuously Caputo $\alpha$-differentiable we have $f\in C^m[a,b]$ and clearly 
${}^{C}\!D_{a}^{\alpha-\beta}{}^{C}\!D_{a}^{\beta}f = {}^{C}\!D_{a}^{\alpha}f$.

Part (ii) is proved. 
The theorem is proved completely.
    \end{proof}

     The following theorem asserts the partial semigroup property of the Caputo fractional differential operators. This theorem includes Theorem~\ref{thm.Cap2} as a particular case, whereas Theorem~\ref{thm.Cap2} is a generalization of Proposition~\ref{prp.Cap1}.
    \begin{theorem}[Partial semigroup property of Caputo fractional differential operators]\label{thm.Cap3}
   I. Let  $\alpha>0$  be arbitrary and $f\in C^{\lceil \alpha\rceil -1}[a,b]$ be continuously Caputo $\alpha$-differentiable. Then
    
 (i)  For any $0<\beta<\alpha$,  $f$ is continuously Caputo  $\beta$-differentiable with continuous derivative $f_1:={}^{C}\!D_{a}^{\beta}f\in C[a,b]$.  Furthermore, if $\beta\not\in\N$ then $({}^{C}\!D_{a}^{\beta}f)(a) = 0$.
 
 (ii)  For any  $0 < \gamma\leq 1$ such that $\beta+\gamma\leq\alpha$ and $\lceil\beta+\gamma\rceil- \lfloor\beta\rfloor=1$, the continuously Caputo $\beta$-derivative $f_1$ provided by (i) above is continuously Caputo $\gamma$-differentiable, and 
 ${}^{C}\!D_{a}^{\gamma}f_1= {}^{C}\!D_{a}^{\beta+\gamma}f$; thus
 ${}^{C}\!D_{a}^{\gamma}{}^{C}\!D_{a}^{\beta}f = {}^{C}\!D_{a}^{\beta+\gamma}f$.
 
  II. Conversely, let $\beta>0$, $0 < \gamma\leq 1$ be such that  $\lceil\beta+\gamma\rceil- \lfloor\beta\rfloor=1$. Assume that a continuous function $h\in C^{\lceil \beta\rceil -1}[a,b]$ is continuously Caputo $\beta$-differentiable such that ${}^{C}\!D_{a}^\beta h=:h_1$ is continuously Caputo $\gamma$-differentiable, and assume additionally that $h_1(a)=0$ if $\beta\not\in\N$. Then $h$ is continuously Caputo $(\beta+\gamma)$-differentiable, and 
 ${}^{C}\!D_{a}^{\gamma}h_1= {}^{C}\!D_{a}^{\beta+\gamma}h$, thus 
 ${}^{C}\!D_{a}^{\gamma}({}^{C}\!D_{a}^{\beta}h)= {}^{C}\!D_{a}^{\beta+\gamma}h$.
    \end{theorem}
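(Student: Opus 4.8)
The plan is to obtain Part~I essentially for free from the already-established Theorem~\ref{thm.Cap2}, and to prove the converse Part~II directly through the inverse integral operators $D_0^r$ of Vainikko~\cite{Vainikko2016}. For Part~I(ii) I would show it is just a relabelling of Theorem~\ref{thm.Cap2}: put $\alpha^\ast:=\beta+\gamma$ and apply that theorem to $f$ with order $\alpha^\ast$ in place of $\alpha$ and the same $\beta$. I must verify its three hypotheses. Since $\beta+\gamma\le\alpha$ we have $\lceil\beta+\gamma\rceil\le\lceil\alpha\rceil$, so $f\in C^{\lceil\alpha\rceil-1}[a,b]\subseteq C^{\lceil\beta+\gamma\rceil-1}[a,b]$, and by Part~(i) the function $f$ is continuously Caputo $(\beta+\gamma)$-differentiable. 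The remaining requirement $\lceil\alpha^\ast\rceil-1\le\beta<\alpha^\ast$ is where the standing assumption enters: $\beta<\beta+\gamma$ is clear, while $\lceil\beta+\gamma\rceil-1=\lfloor\beta\rfloor\le\beta$ is exactly the condition $\lceil\beta+\gamma\rceil-\lfloor\beta\rfloor=1$. Theorem~\ref{thm.Cap2} then yields at once that $f_1={}^{C}\!D_{a}^{\beta}f$ is continuously Caputo $(\alpha^\ast-\beta)=\gamma$-differentiable with ${}^{C}\!D_{a}^{\gamma}f_1={}^{C}\!D_{a}^{\beta+\gamma}f$.

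For Part~I(i) (now valid for all $0<\beta<\alpha$, not only $\beta\ge\lceil\alpha\rceil-1$) I would repeat the $D_0$-argument from the proof of Theorem~\ref{thm.Cap2}(i), but now I must account for the fact that the Taylor polynomial defining the Caputo $\beta$-derivative has order $\lceil\beta\rceil-1$, which may be strictly smaller than $m-1$, $m:=\lceil\alpha\rceil$. Writing $\hat f:=f-T_{m-1}[f;a]$, Vainikko~\cite[Proposition~5.1]{Vainikko2016} makes $\hat f$ into a $D_0^\alpha$-differentiable function, hence $D_0^\beta$-differentiable by the semigroup property of $D_0^r$. The gap between $T_{m-1}[f;a]$ and $T_{\lceil\beta\rceil-1}[f;a]$ consists of monomials $(t-a)^k$ with $\lceil\beta\rceil\le k\le m-1$, and each of these is $D_0^\beta$-differentiable because $(t-a)^k=J_a^\beta\bigl(\tfrac{\Gamma(k+1)}{\Gamma(k-\beta+1)}(t-a)^{k-\beta}\bigr)$ with $k-\beta\ge0$. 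Hence $f-T_{\lceil\beta\rceil-1}[f;a]$ is $D_0^\beta$-differentiable, so by Proposition~5.1 again $f$ is continuously Caputo $\beta$-differentiable; the vanishing $({}^{C}\!D_{a}^{\beta}f)(a)=0$ for $\beta\notin\N$ follows from Vainikko~\cite[Theorem~5.2]{Vainikko2016} exactly as in Theorem~\ref{thm.Cap2}(i), while the integer case $\alpha\in\N$ is covered by $f\in C^m[a,b]$ together with Diethelm~\cite[Lemma~3.11]{Kai}.

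For the converse Part~II I would again pass through $J_a^r$ and $D_0^r$, using the semigroup property in the composition direction: if $v$ is $D_0^\beta$-differentiable and $D_0^\beta v$ is $D_0^\gamma$-differentiable, then $v=J_a^{\beta+\gamma}(D_0^\gamma D_0^\beta v)$ is $D_0^{\beta+\gamma}$-differentiable with $D_0^{\beta+\gamma}v=D_0^\gamma D_0^\beta v$. Set $n:=\lceil\beta\rceil$ and split according to the constraint. When $\beta\notin\N$ the constraint forces $0<\gamma<1$; here $\hat h:=h-T_{n-1}[h;a]$ satisfies $\hat h=J_a^\beta h_1$ by Proposition~5.1, while $h_1(a)=0$ together with continuous Caputo $\gamma$-differentiability of $h_1$ gives $h_1=J_a^{\gamma}({}^{C}\!D_{a}^{\gamma}h_1)$; composing, $\hat h=J_a^{\beta+\gamma}({}^{C}\!D_{a}^{\gamma}h_1)$, and since $\lceil\beta+\gamma\rceil=n$ the polynomial $T_{\lceil\beta+\gamma\rceil-1}[h;a]$ coincides with $T_{n-1}[h;a]$, so Proposition~5.1 delivers the claim. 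When $\beta\in\N$ I would instead use $\hat h':=h-T_{\beta}[h;a]\in C_0^{\beta}[a,b]=J_a^\beta C[a,b]$, compute $D^\beta\hat h'=h_1-h_1(a)=J_a^\gamma({}^{C}\!D_{a}^{\gamma}h_1)$ for $\gamma<1$ to get $\hat h'=J_a^{\beta+\gamma}({}^{C}\!D_{a}^{\gamma}h_1)$, and dispose of the last case $\gamma=1$ by the classical identity ${}^{C}\!D_{a}^{\beta+1}h=D^{\beta+1}h=Dh_1={}^{C}\!D_{a}^{1}h_1$.

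I expect Part~II to be the main obstacle, precisely because it reconstructs the higher-order derivative from its two pieces while keeping the Taylor-polynomial bookkeeping consistent. The delicate point is that $J_a^\gamma$ sends continuous functions to functions vanishing at $a$, so $h_1$ can lie in the range of $J_a^\gamma$ only if $h_1(a)=0$; this is exactly why the hypothesis $h_1(a)=0$ is imposed in the non-integer case, and why in the integer case the constant $h_1(a)$ must instead be absorbed into $T_{\beta}[h;a]$. Checking, in each case, that the order $\lceil\beta+\gamma\rceil-1$ of the Taylor polynomial used for the combined derivative matches the one produced by the composition is the crux of the whole argument.
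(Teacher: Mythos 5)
Your proposal is correct, and its overall architecture matches the paper's: Part I(ii) is obtained by exactly the paper's reduction (apply Theorem~\ref{thm.Cap2} with $\alpha^\ast=\beta+\gamma$, the constraint $\lceil\beta+\gamma\rceil-\lfloor\beta\rfloor=1$ supplying the hypothesis $\lceil\alpha^\ast\rceil-1\le\beta$), and Part II in the case $\beta\notin\N$ is the same $D_0^r$-semigroup argument via Vainikko's Proposition~5.1, including the observation that $\lceil\beta\rceil=\lceil\beta+\gamma\rceil$ keeps the Taylor polynomials aligned. You diverge in two places, both legitimately. For Part I(i) with $\beta<\lceil\alpha\rceil-1$ the paper simply notes $f\in C^{\lfloor\beta\rfloor+1}[a,b]$, hence $f$ is continuously Caputo $(\lfloor\beta\rfloor+1)$-differentiable, and applies Theorem~\ref{thm.Cap2} to that intermediate integer order; you instead rerun the $D_0^\beta$ argument directly and explicitly check that the monomials $(t-a)^k$, $\lceil\beta\rceil\le k\le m-1$, lie in $J_a^\beta C[a,b]$ --- more work, but it makes visible the Taylor-polynomial bookkeeping that the paper's shortcut hides. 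For Part II with $\beta\in\N$, $\beta+\gamma\notin\N$, the paper invokes Vainikko's Theorem~5.2 and matches the explicit integral formulas for ${}^{C}\!D_{a}^{\beta+\gamma}h$ and ${}^{C}\!D_{a}^{\gamma}h_1$, whereas you stay inside the $J_a^r$ calculus by absorbing the constant $h_1(a)$ into $T_{\beta}[h;a]$ and composing $\hat h'=J_a^{\beta}\bigl(J_a^{\gamma}({}^{C}\!D_{a}^{\gamma}h_1)\bigr)$; this is arguably cleaner and makes the role of the hypothesis $h_1(a)=0$ (needed only when $\beta\notin\N$) more transparent. The only point to tighten is the sub-case $\beta\notin\N$, $\beta+\gamma\in\N$ of Part II, which the paper treats separately because Proposition~5.1 as used elsewhere concerns non-integer orders: there one should say explicitly, as the paper does, that $\hat h\in J_a^{\beta+\gamma}C[a,b]$ with $\beta+\gamma\in\N$ forces $D^{\beta+\gamma}\hat h=D^{\beta+\gamma}h={}^{C}\!D_{a}^{\beta+\gamma}h$ to equal the integrand, rather than citing Proposition~5.1 wholesale.
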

  
    \begin{proof}
   I. (i) 
     If $\lceil \alpha\rceil -1\leq\beta< \alpha$ then we are done by using Theorem~\ref{thm.Cap2}(i).
    If $\beta<\lceil \alpha\rceil-1$, then $\lfloor\beta\rfloor + 1\leq \lceil \alpha\rceil -1$, hence $f\in C^{\lfloor\beta\rfloor + 1}[a,b]$. Consequently $f$ is continuously Caputo $(\lfloor\beta\rfloor + 1)$-differentiable. Therefore, by Theorem~\ref{thm.Cap2} $f$ is continuously Caputo  $\beta$-differentiable with continuous derivative $f_1:={}^{C}\!D_{a}^{\beta}f\in C[a,b]$; and moreover, if $\beta\not\in\N$ then $({}^{C}\!D_{a}^{\beta}f)(a) = 0$.
    
    (ii) By (i) above, since $\beta+\gamma\leq\alpha$ the function $f$ is continuously Caputo
    $(\beta+\gamma)$-differentiable. 
    
    Now, since $\gamma>0$ and $\lceil\beta+\gamma\rceil- \lfloor\beta\rfloor=1$ we have $\lceil\beta+\gamma\rceil-1\leq \beta<\beta+\gamma$. Therefore, by Theorem~\ref{thm.Cap2} the continuously Caputo $\beta$-derivative $f_1$ provided by (i) above is continuously Caputo $\gamma$-differentiable, and 
 ${}^{C}\!D_{a}^{\gamma}f_1= {}^{C}\!D_{a}^{\beta+\gamma}f$; thus
 ${}^{C}\!D_{a}^{\gamma}{}^{C}\!D_{a}^{\beta}f = {}^{C}\!D_{a}^{\beta+\gamma}f$.
 
 II. {\em Case 1: $\beta\not\in\N$.}  We put ${\hat h}:= h- T_{\lceil\beta\rceil-1}[h;a]$. 
Since $h$ is continuously Caputo $\beta$-differentiable, by 
Vainikko~\cite[Proposition 5.1]{Vainikko2016} the function ${\hat h}$ is $D_0^\beta$-differentiable and $D_0^{\beta}{\hat h} = {}^{C}\!D_{a}^{\beta}h=h_1$. Note that, in this case $0<\gamma<1$.
 Therefore, since by assumption $h_1$ is continuously  Caputo $\gamma$-differentiable  and $h_1(a)=0$, the function $h_1$ is  $D_0^\gamma$-differentiable and 
${}^{C}\!D_{a}^{\gamma}h_1  = D_0^\gamma h_1$. Consequently, by the semigroup property of the 
operators $D_0^r$, $r>0$, with respect to the parameter $r$,  the function ${\hat h}$ is 
$D_0^{\beta+\gamma}$-differentiable and we have
$$
 D_0^{\beta+\gamma}{\hat h} = D_0^{\gamma}D_0^{\beta}{\hat h} = D_0^{\gamma}h_1=
 {}^{C}\!D_{a}^{\gamma}h_1. 
$$
Since $\lceil\beta\rceil=\lceil\beta+\gamma\rceil$ and ${\hat h}$ is $D_0^{\beta+\gamma}$-differentiable, in case $\beta+\gamma\not\in\N$ by Vainikko~\cite[Proposition 5.1]{Vainikko2016} the function $h$ is continuously Caputo $(\beta+\gamma)$-differentiable, and ${}^{C}\!D_{a}^{\beta+\gamma}h = D_0^{\beta+\gamma}{\hat h}$, hence
${}^{C}\!D_{a}^{\beta+\gamma}h ={}^{C}\!D_{a}^{\gamma}h_1$. Otherwise, if 
$\beta+\gamma\in\N$ then,  since $T_{\lceil\beta\rceil-1}[h;a]$ is a polynomial of degree not greater than $\beta+\gamma-1$, we have
$$
D_0^{\beta+\gamma}{\hat h}= D^{\beta+\gamma}{\hat h} = D^{\beta+\gamma}(h - T_{\lceil\beta\rceil-1}[h;a])
= D^{\beta+\gamma} h = {}^{C}\!D_{a}^{\beta+\gamma}h,
$$
hence ${}^{C}\!D_{a}^{\beta+\gamma}h= {}^{C}\!D_{a}^{\gamma}h_1$.

{\em  Case 2: $\beta\in\N$, $\beta+\gamma\not\in\N$.} In this case we have $\lceil\beta+\gamma\rceil=\beta+1$, $\gamma<1$. The function $h\in C^\beta[a,b]$ and $h_1={}^{C}\!D_{a}^{\beta}h = D^\beta h\in C[a,b]$ because $h$ is continuously Caputo $\beta$-differentiable. By Vainikko~\cite[Theorem 5.2]{Vainikko2016}, since $h_1$ is continuously Caputo $\gamma$-differentiable, $0<\gamma<1$, we have the finite limit $\lim_{t\to a}(t-a)^{-\gamma}(h_1(t)-h_1(a))=: c_1$ exists and
$$
\sup_{a< t\leq b}\Big|\int_{\theta t}^t (t-s)^{-\gamma-1} (h_1(t)-h_1(s)) ds\Big| \to 0\quad\hbox{as}\quad \theta\uparrow 1.
$$
Now apply Vainikko~\cite[Theorem 5.2]{Vainikko2016} to the function $h\in C^\beta[a,b]$ with $D^\beta h = h_1$ we get that the function $h$ is continuously Caputo $(\beta+\gamma)$-differentiable and 
${}^{C}\!D_{a}^{\beta+\gamma}h(a)= \Gamma(\gamma+1)c_1={}^{C}\!D_{a}^{\gamma}h_1(a)$. Moreover, for $a<t\leq b$ we have
\begin{eqnarray*}
{}^{C}\!D_{a}^{\beta+\gamma}h (t) &=& \frac{1}{\Gamma(1-\gamma)}\Big( (t-a)^{-\gamma}(h_1(t)-h_1(a))\\
 &&\hspace*{2cm} +\; \gamma\int_a^t (t-s)^{-\gamma-1}(h_1(t)-h_1(s)) ds\Big)\\
 &=& {}^{C}\!D_{a}^{\gamma}h_1 (t). 
 \end{eqnarray*}
Thus, 
${}^{C}\!D_{a}^{\beta+\gamma}h ={}^{C}\!D_{a}^{\gamma}h_1$.

{\em Case 3: $\beta\in\N$, $\beta+\gamma\in\N$.} Our assertion is true and this case due to the rule of classical integer order differentiation.
          \end{proof}
    
       \begin{theorem}\label{thm.Cap4}
    Let $\alpha>0$  be arbitrary and $f\in C^{\lceil \alpha\rceil -1}[a,b]$ be continuously Caputo $\alpha$-differentiable. Then
  for any  $l\in\N$, $0\leq l\leq  \lceil\alpha\rceil-1$ the function $f$ is continuously Caputo $l$-differentiable with $f_1:=D^{l}f\in C[a,b]$, and the derivative $f_1$ is continuously Caputo $(\alpha-l)$-differentiable. Moreover, 
  ${}^{C}\!D_{a}^{\alpha-l}f_1= {}^{C}\!D_{a}^{\alpha}f$; thus
 ${}^{C}\!D_{a}^{\alpha-l}D^{l}f = {}^{C}\!D_{a}^{\alpha}f$.
    \end{theorem}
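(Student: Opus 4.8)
The plan is to reduce the statement to the Riemann--Liouville setting by removing the Taylor polynomial (exactly as Caputo differentiability is defined) and then to exploit that the subtracted polynomial forces all low-order derivatives at $a$ to vanish, which lets the integral operator commute past the integer derivative $D^l$. Write $m:=\lceil\alpha\rceil$ and fix $l$ with $0\le l\le m-1$. First I would dispose of the easy parts: since $f\in C^{m-1}[a,b]$ and $l\le m-1$ we have $f\in C^l[a,b]$, so $D^lf\in C[a,b]$ and ${}^{C}\!D_{a}^{l}f=D^lf$ (for integer order the Caputo derivative is the classical one, the Taylor remainder having degree $<l$); and the case $\alpha\in\N$ is settled by plain calculus, since then $\alpha-l\in\N$ and ${}^{C}\!D_{a}^{\alpha-l}(D^lf)=D^{\alpha-l}D^lf=D^{\alpha}f={}^{C}\!D_{a}^{\alpha}f$.

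For the substantive case $\alpha\notin\N$, set $\phi:=f-T_{m-1}[f;a]$, so that ${}^{C}\!D_{a}^{\alpha}f={}^{RL}\!D_{a}^{\alpha}\phi$ and continuous Caputo $\alpha$-differentiability of $f$ means $\phi$ is continuously Riemann--Liouville $\alpha$-differentiable, i.e. $J_a^{m-\alpha}\phi\in C^m[a,b]$. By construction $\phi\in C^{m-1}[a,b]$ with $D^k\phi(a)=0$ for $k=0,\dots,m-1$. A short computation with the Taylor polynomial gives $D^l\phi=D^lf-T_{m-l-1}[D^lf;a]$; since $\lceil\alpha-l\rceil=m-l$, this is exactly the function whose Riemann--Liouville $(\alpha-l)$-derivative is the Caputo $(\alpha-l)$-derivative of $D^lf$. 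Thus the whole claim reduces to showing that $D^l\phi$ is continuously Riemann--Liouville $(\alpha-l)$-differentiable and that ${}^{RL}\!D_{a}^{\alpha-l}(D^l\phi)={}^{RL}\!D_{a}^{\alpha}\phi$.

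The crux is the commutation $J_a^{m-\alpha}(D^l\phi)=D^l(J_a^{m-\alpha}\phi)$. I would obtain it by applying the Miller--Ross formula \cite[Theorem 2(b), p.~59]{MillerRoss93} $l$ times, peeling off one derivative at a time: each application produces a boundary term proportional to some $D^j\phi(a)$, and all of these vanish because $D^j\phi(a)=0$ for $j=0,\dots,l-1$. Since $J_a^{m-\alpha}\phi\in C^m[a,b]$, the right-hand side $D^l J_a^{m-\alpha}\phi$ lies in $C^{m-l}[a,b]$; as $(m-l)-(\alpha-l)=m-\alpha$, this says precisely that $J_a^{(m-l)-(\alpha-l)}(D^l\phi)\in C^{m-l}[a,b]$, i.e. $D^l\phi$ is continuously Riemann--Liouville $(\alpha-l)$-differentiable. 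Finally I would compute ${}^{RL}\!D_{a}^{\alpha-l}(D^l\phi)=D^{m-l}J_a^{m-\alpha}(D^l\phi)=D^{m-l}D^l J_a^{m-\alpha}\phi=D^m J_a^{m-\alpha}\phi={}^{RL}\!D_{a}^{\alpha}\phi={}^{C}\!D_{a}^{\alpha}f$, and translate back through $\phi$ to conclude ${}^{C}\!D_{a}^{\alpha-l}(D^lf)={}^{C}\!D_{a}^{\alpha}f$.

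The main obstacle I anticipate is exactly this commutation: because $\alpha-l$ may exceed $1$, one cannot simply iterate the one-step semigroup result of Theorem~\ref{thm.Cap3}, so the whole gap $\alpha-l$ must be treated at once. The vanishing of the low-order derivatives of $\phi$ at $a$---a feature the Caputo definition builds in but the bare Riemann--Liouville operator lacks---is what lets the integral operator pass through $D^l$ with no remainder, and is the real reason the Caputo operator keeps a semigroup property for $\alpha>1$ where, by Proposition~\ref{prp.RL3}, the Riemann--Liouville operator does not. Alternatively, the same conclusion follows from Vainikko's operator $D_0^{\alpha}$: writing $\phi=J_a^{\alpha}u$ with $u={}^{C}\!D_{a}^{\alpha}f$ and using the genuine semigroup $J_a^{\alpha}=J_a^{l}J_a^{\alpha-l}$ gives $D^l\phi=J_a^{\alpha-l}u$, whence $D^lf$ is continuously Caputo $(\alpha-l)$-differentiable with derivative $u$.
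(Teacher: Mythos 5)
Your argument is correct, and it reaches the conclusion by a genuinely different route than the paper. The paper's proof is essentially a two-line application of Vainikko's representation theorem (Theorem 5.2 of \cite{Vainikko2016}): that result expresses the continuous Caputo derivative of order $\alpha$ of $f\in C^{m-1}[a,b]$, $m=\lceil\alpha\rceil$, by a Marchaud-type formula depending only on $f_2:=D^{m-1}f$, together with a criterion for its existence phrased in terms of $f_2$; since $D^{m-1}f=D^{(m-l)-1}(D^l f)$ and $\lceil\alpha-l\rceil=m-l$, the criterion and the formula for ${}^{C}\!D_{a}^{\alpha-l}(D^l f)$ are verbatim the same as those for ${}^{C}\!D_{a}^{\alpha}f$, and the identity follows at once. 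You instead work directly from Definitions \ref{def.RL}--\ref{def.Cap}: you pass to $\phi=f-T_{m-1}[f;a]$, observe $D^l\phi=D^lf-T_{m-l-1}[D^lf;a]$ and $D^j\phi(a)=0$ for $j\le m-1$, and iterate the Miller--Ross formula to get $J_a^{m-\alpha}(D^l\phi)=D^l(J_a^{m-\alpha}\phi)\in C^{m-l}[a,b]$, from which both the $(\alpha-l)$-differentiability of $D^lf$ and the equality of the derivatives drop out. Your approach is more elementary and self-contained -- it uses only the one-derivative commutation lemma the paper already invokes in the proof of Theorem \ref{thm.RL1}, and it makes transparent that the vanishing of the Taylor-corrected function's derivatives at $a$ is what saves the semigroup property for $\alpha>1$ in the Caputo case; the paper's approach buys brevity at the cost of importing Vainikko's full characterization. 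Your closing remark about writing $\phi=J_a^{\alpha}u$ and using $J_a^{\alpha}=J_a^{l}J_a^{\alpha-l}$ is in fact closest in spirit to the paper's $D_0^{\alpha}$ framework, though the paper does not use it for this particular theorem. All the hypotheses you need are available: $D^lf\in C^{m-l-1}[a,b]$ so the Caputo derivative of order $\alpha-l$ is well defined, and each $D^j\phi$ with $j\le l-1$ lies in $C^1[a,b]$ and vanishes at $a$, so every boundary term in the iterated Miller--Ross step disappears.
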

    \begin{proof} In case $\alpha\in\N$ we are back to the classical integer order differentiation and the theorem trivially holds. Assume that $\alpha\not\in\N$.
    
   Put $m:=  \lceil\alpha\rceil$, $g:= T_{m-1}[f;a]$, $f_2:=D^{m-1}f$. Since $f\in C^{m-1}[a,b]$ is continuously Caputo $\alpha$-differentiable, by
    Vainikko~\cite[Theorem 5.2]{Vainikko2016} the continuous Caputo derivative of order $\alpha$ of $f$ is given by 
\begin{eqnarray*}
{}^{C}\!D_{a}^{\alpha}f (t) &=& \frac{1}{\Gamma(m-\alpha)}\Big( (t-a)^{m-1-\alpha}(f_2(t)-f_2(a))\\
 &&+\; (\alpha-m+1)\int_a^t (t-s)^{m-\alpha-2}(f_2(t)-f_2(s)) ds\Big), \quad a\leq t\leq b.
 \end{eqnarray*}
Clearly, $f_2=D^{m-1-l}f_1$, hence  by
    Vainikko~\cite[Theorem 5.2]{Vainikko2016} the function $f_1$ is continuously Caputo $(\alpha-l)$-differentiable, and its continuous Caputo derivative of order $(\alpha-l)$ is given by
\begin{eqnarray*}
{}^{C}\!D_{a}^{\alpha-\beta}f _1(t) &=& 
\frac{1}{\Gamma(m-\alpha)}\Big( (t-a)^{m-1-\alpha}(f_2(t)-f_2(a))\\
 &&+\; (\alpha-m+1)\int_a^t (t-s)^{m-\alpha-2}(f_2(t)-f_2(s)) ds\Big), \quad a\leq t\leq b.
 \end{eqnarray*}
 Consequently,  ${}^{C}\!D_{a}^{\alpha-l}f_1= {}^{C}\!D_{a}^{\alpha}f$; thus
 ${}^{C}\!D_{a}^{\alpha-l}D^{l}f = {}^{C}\!D_{a}^{\alpha}f$.          
 \end{proof}

    \begin{proposition}\label{prp.Cap5}
    (i) For any $0<\alpha, \beta\in\R$ such that $\beta\not\in\N$, $\lceil\beta\rceil< \alpha$ there exits a continuous function $f\in C[a,b]$ such that $f$ is continuously Caputo $\alpha$-differentiable hence 
    continuously Caputo $\beta$-differentiable, but the continuous Caputo derivative ${}^{C}\!D_{a}^{\beta}f$ is not continuously Caputo $(\alpha-\beta)$-differentiable.
    
    (ii) For any $0<\alpha, \beta\in\R$ such that $\beta\not\in\N$, $\lceil\beta\rceil+1< \alpha$  there exits a continuous function $f\in C[a,b]$ such that $f$ is continuously Caputo $\alpha$-differentiable hence 
    continuously Caputo $\beta$-differentiable, but the continuous Caputo derivative ${}^{C}\!D_{a}^{\beta}f$ is not Caputo $(\alpha-\beta)$-differentiable.
    
    (iii) For any $\beta,\gamma>0$, $\beta\not\in\N$, and $\gamma\not\in\N$, there exists a continuous function $h\in C^{\lceil \beta\rceil -1}[a,b]$ which is continuously Caputo $\beta$-differentiable  such that  ${}^{C}\!D_{a}^\beta h$ is continuously Caputo $\gamma$-differentiable, but
    ${}^{C}\!D_{a}^{\gamma}({}^{C}\!D_{a}^{\beta}h)\not= {}^{C}\!D_{a}^{\beta+\gamma}h$.
    \end{proposition}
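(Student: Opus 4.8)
The plan is to produce explicit single-power counterexamples $(t-a)^p$ and to read everything off from the power rule ${}^{RL}\!D_{a}^{\delta}(t-a)^p=\frac{\Gamma(p+1)}{\Gamma(p+1-\delta)}(t-a)^{p-\delta}$ (Diethelm~\cite[Example 2.4, p.~28]{Kai}), using the fact that for an integer exponent $k\geq\lceil\beta\rceil$ the Taylor polynomial $T_{\lceil\beta\rceil-1}[(t-a)^k;a]$ vanishes, so that the Caputo and Riemann--Liouville $\beta$-derivatives of $(t-a)^k$ coincide and equal $\frac{\Gamma(k+1)}{\Gamma(k+1-\beta)}(t-a)^{k-\beta}$. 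Throughout write $m=\lceil\alpha\rceil$ and $n=\lceil\beta\rceil$.

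For (i) and (ii) I would take $f:=(t-a)^{n}$. Because $\beta\notin\N$ and $\lceil\beta\rceil<\alpha$ we have $n\leq m-1$, so $f$ is a polynomial of degree $\leq m-1$; thus $f\in C^{m-1}[a,b]$, $f-T_{m-1}[f;a]=0$, and ${}^{C}\!D_{a}^{\alpha}f=0$, i.e.\ $f$ is (trivially) continuously Caputo $\alpha$-differentiable. The power rule gives $f_1:={}^{C}\!D_{a}^{\beta}f=\frac{\Gamma(n+1)}{\Gamma(n+1-\beta)}(t-a)^{n-\beta}\in C[a,b]$, while the formal Caputo $(\alpha-\beta)$-derivative of $f_1$ is a nonzero multiple of $(t-a)^{n-\alpha}$, whose exponent is negative since $n<\alpha$. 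When $\alpha-\beta\leq 1$ one has $f_1\in C^{0}[a,b]$ and $-1<n-\alpha<0$, so ${}^{C}\!D_{a}^{\alpha-\beta}f_1$ exists in $L_1[a,b]$ but is discontinuous; when $\alpha-\beta>1$ one has $n-\beta\in(0,1)<\lceil\alpha-\beta\rceil-1$, so $f_1\notin C^{\lceil\alpha-\beta\rceil-1}[a,b]$ and $f_1$ is not even Caputo $(\alpha-\beta)$-differentiable. In either case $f_1$ fails to be continuously Caputo $(\alpha-\beta)$-differentiable, which is (i). For (ii) the hypothesis $\lceil\beta\rceil+1<\alpha$ forces $\alpha-\beta>1$, so the second alternative always applies and $f_1$ is not Caputo $(\alpha-\beta)$-differentiable (equivalently, the formal exponent satisfies $n-\alpha<-1$, so the would-be derivative is not integrable), which is (ii).

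For (iii) the point is to make the inner derivative $h_1$ carry a nonzero low-order Taylor coefficient at $a$---precisely the situation excluded by the hypothesis $h_1(a)=0$ in Theorem~\ref{thm.Cap3}.II. I would take $h:=(t-a)^{\beta+\lfloor\gamma\rfloor}$ (so $h=(t-a)^\beta$ when $\gamma<1$). Then $h\in C^{\lceil\beta\rceil-1}[a,b]$ and, by the power rule, $h_1:={}^{C}\!D_{a}^{\beta}h=\frac{\Gamma(\beta+\lfloor\gamma\rfloor+1)}{\Gamma(\lfloor\gamma\rfloor+1)}(t-a)^{\lfloor\gamma\rfloor}$ is a monomial of degree $\lfloor\gamma\rfloor=\lceil\gamma\rceil-1$; hence $h$ is continuously Caputo $\beta$-differentiable, and $h_1$, being a polynomial of degree $\leq\lceil\gamma\rceil-1$, is continuously Caputo $\gamma$-differentiable with ${}^{C}\!D_{a}^{\gamma}h_1=0$. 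On the other hand ${}^{C}\!D_{a}^{\beta+\gamma}h=\frac{\Gamma(\beta+\lfloor\gamma\rfloor+1)}{\Gamma(\lfloor\gamma\rfloor+1-\gamma)}(t-a)^{\lfloor\gamma\rfloor-\gamma}$ is nonzero and singular, since $\lfloor\gamma\rfloor-\gamma\in(-1,0)$ and $\Gamma(\lfloor\gamma\rfloor+1-\gamma)$ is finite because $\gamma\notin\N$. Thus ${}^{C}\!D_{a}^{\gamma}({}^{C}\!D_{a}^{\beta}h)=0\neq{}^{C}\!D_{a}^{\beta+\gamma}h$, giving (iii).

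\textbf{Main obstacle.} The arithmetic with the power rule is routine; the work lies in the integer-part bookkeeping needed to confirm, for \emph{every} admissible parameter choice, that the constructed function genuinely satisfies all the differentiability hypotheses in the statement while the targeted derivative leaves $C[a,b]$, leaves $L_1[a,b]$, or leaves the domain $C^{\lceil\cdot\rceil-1}[a,b]$, according to the conclusion demanded. I expect the real difficulty to be the exceptional configuration in (iii) in which an integer lies strictly between $\beta$ and $\beta+\gamma$ and moreover $\lceil\beta+\gamma\rceil-\lceil\beta\rceil>\lfloor\gamma\rfloor$: there the chosen $h$ is not smooth enough for ${}^{C}\!D_{a}^{\beta+\gamma}h$ to exist, and in fact \emph{no} admissible $h$ can both keep ${}^{C}\!D_{a}^{\beta+\gamma}h$ defined and leave $h_1$ with a nonzero Taylor polynomial at $a$, since the smoothness required to define ${}^{C}\!D_{a}^{\beta+\gamma}h$ forces $h\in C^{\lceil\beta\rceil}[a,b]$ and hence $h_1(a)=0$ by Diethelm~\cite[Lemma 3.11, p.~56]{Kai}. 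In this regime the semigroup identity fails because its right-hand side is undefined while its left-hand side equals $0$; phrasing this last case so that ``${}^{C}\!D_{a}^{\gamma}({}^{C}\!D_{a}^{\beta}h)\neq{}^{C}\!D_{a}^{\beta+\gamma}h$'' is unambiguous is the step I expect to require the most care.
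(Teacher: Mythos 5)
Your proof of parts (i) and (ii) is exactly the paper's: the same test function $f=(t-a)^{\lceil\beta\rceil}$, the same power-rule computation giving ${}^{C}\!D_{a}^{\alpha}f=0$ and $f_1=\frac{\Gamma(n+1)}{\Gamma(n+1-\beta)}(t-a)^{n-\beta}$, and the same dichotomy between the exponent $n-\alpha$ lying in $(-1,0)$ (the $(\alpha-\beta)$-derivative exists in $L_1[a,b]$ but is discontinuous) and lying below $-1$ (not even integrable). For part (iii) the paper simply takes $h=(t-a)^{\beta}$ for \emph{all} $\gamma$, obtaining $h_1\equiv\Gamma(\beta+1)$, ${}^{C}\!D_{a}^{\gamma}h_1=0$ and ${}^{C}\!D_{a}^{\beta+\gamma}h=\frac{\Gamma(\beta+1)}{\Gamma(1-\gamma)}(t-a)^{-\gamma}\neq 0$; your choice $h=(t-a)^{\beta+\lfloor\gamma\rfloor}$ coincides with this when $\gamma<1$ and is a mild refinement for $\gamma>1$, keeping the formal right-hand side at least locally integrable. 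The definedness caveat you flag is genuine and applies equally to the paper's argument: for $h=(t-a)^{\beta}$ the derivative ${}^{C}\!D_{a}^{\beta+\gamma}h$ in the sense of Definition~\ref{def.Cap} requires $h\in C^{\lceil\beta+\gamma\rceil-1}[a,b]$, which fails as soon as $\lceil\beta+\gamma\rceil>\lceil\beta\rceil$ (e.g.\ $\beta=1/2$, $\gamma=3/4$), so the paper is silently evaluating the right-hand side by the formal power rule. In that regime the claimed inequality should be read, in both your version and the paper's, as the left-hand side equalling $0$ while the right-hand side is either undefined or a genuinely singular nonzero function; with that reading your argument is complete and matches the paper's.
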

    
    \begin{proof}
    Put  $m:=\lceil\alpha\rceil$, $n:= \lceil\beta\rceil$.  Choose $f:= (t-a)^n$ for both parts (i)-(ii) of the proposition.
 By assumption, $m> n$ and $n-1<\beta<n$.   Since $f\in C^\infty[a,b]$ the function $f$ is continuously Caputo $\beta$-differentiable as well as 
    continuously Caputo $\alpha$-differentiable. A simple calculation gives
    \begin{eqnarray*}
    {}^{C}\!D_{a}^{\alpha}f &=& J_a^{m-\alpha} D^{m}f= 0,\\
     f_1:=  {}^{C}\!D_{a}^{\beta}f  &=& J_a^{n-\beta} D^{n}f=J_a^{n-\beta}{n!}= \frac{\Gamma(n+1)}{\Gamma(n-\beta+1)} (t-a)^{n-\beta}.
    \end{eqnarray*}
     Consequently, we have
    \begin{eqnarray*}
    {}^{C}\!D_{a}^{\alpha-\beta}f_1 &=& {}^{C}\!D_{a}^{\alpha-\beta}\frac{\Gamma(n+1)}{\Gamma(n-\beta+1)} (t-a)^{n-\beta}\\
    &=& \frac{\Gamma(n+1)}{\Gamma(n+1-\alpha)}(t-a)^{n-\alpha}.
    \end{eqnarray*} 
    
 (i)    Since by assumption $\alpha>\lceil\beta\rceil=n$ we have $n-\alpha<0$. Therefore $(t-a)^{n-\alpha}\not\in C[a,b]$, hence $f_1= {}^{C}\!D_{a}^{\beta}f$ is  not continuously Caputo $(\alpha-\beta)$-differentiable. Furthermore,  in case $-1<n-\alpha<0$ we have $(t-a)^{n-\alpha}\in L_1[a,b]$ hence $f_1$ is Caputo  $(\alpha-\beta)$-differentiable, but ${}^{C}\!D_{a}^{\alpha}f  \not= {}^{C}\!D_{a}^{\alpha-\beta}f_1$.
    
    (ii) Since by assumption $\alpha>\lceil\beta\rceil+1$ we have $n-\alpha<-1$. Therefore $(t-a)^{n-\alpha}\not\in L_1[a,b]$, hence ${}^{C}\!D_{a}^{\beta}f$ is  not Caputo $(\alpha-\beta)$-differentiable.
    
    (iii) Take $h:= (t-a)^\beta$ then $h$ is continuously Caputo $\beta$-differentiable with derivative $h_1={}^{C}\!D_{a}^{\beta}h = \Gamma(\beta+1)$. The function $h_1$ is continuously Caputo $\gamma$-differentiable, and  ${}^{C}\!D_{a}^{\gamma}h_1=0$. On the other hand, 
    $$
    {}^{C}\!D_{a}^{\beta+\gamma}h = {}^{C}\!D_{a}^{\beta+\gamma}(t-a)^\beta =
    \frac{\Gamma(\beta+1)}{\Gamma(1-\gamma)}(t-a)^{-\gamma}.
    $$
    Therefore, ${}^{C}\!D_{a}^{\gamma}({}^{C}\!D_{a}^{\beta}h)= {}^{C}\!D_{a}^{\gamma}h_1
    \not= {}^{C}\!D_{a}^{\beta+\gamma}h$.
    \end{proof}
       
   \begin{remark}\label{rem.Cap}
   (i) The assumption that $f$ is continuous Caputo $\alpha$-differentiable in  Proposition~\ref{prp.Cap1} does not imply that $f(a)=0$ as in the Riemann-Liouville case (see Remark~\ref{rem.RL}(i)).\\ 
   (ii) Theorem 3.13 in Diethelm~\cite{Kai} is similar to Theorem~\ref{thm.Cap4}, but it requires a stronger smoothness condition on $f$. \\
   (iii) Theorem \ref{thm.Cap2}--\ref{thm.Cap4} and Propositions \ref{prp.Cap1}, \ref{prp.Cap5} provide us with a complete answer to the question of whether a continuous Caputo $\alpha$-differentiable $f$ is always  continuous Caputo $\beta$-differentiable for $\beta<\alpha$, and provide us a complete description of a (partial) semigroup property of the Caputo fractional differential operators.
       \end{remark}
      
      \section{Applications}\label{sec.applications}
      
      In this section we present some applications of the semigroup property of  Caputo fractional differential operators derived in the preceding section. Namely we will present our results on reduction of a multi-term fractional differential equation to a single-term or to a multi-order fractional differential system, and a theorem on existence and uniqueness of solution to a general multi-term Caputo fractional differential system. 
      
      In this section, speaking of a solution of Caputo fractional differential equation we mean a function which is continuously Caputo differentiable of the desired order and satisfies the equation in consideration; and for the vector case we do the differentiation and computation component-wise. Note that this definition is widely used in the literature \cite{Kai}, and it allow us to show that the solution of the Caputo fractional differential equation with continuous coefficients is equivalent to the solution of the corresponding Volterra integral equation (see 
      Diethelm, Siengmund and Tuan~\cite[Definition 2.1]{DieSieTuan2017}, Diethelm~\cite[Chapter 6]{Kai})
      
      \subsection{Reduction of a multi-term system to a single-term or multi-order system of Caputo fractional differential equations}
      
  In this subsection we deal with the problem of reduction of multi-term Caputo fractional differential equation to  a  system of single-term or multi-order Caputo fractional differential equation.
  
  In certain cases we have to deal with Caputo fractional differential equations of order higher than 1 or equation of many differential term of different orders, for example in the problem of fractional relaxation and oscillation (see Gorenflo et all~\cite[Section 8.1.2]{Gorenflo2014}), or the Bagley-Torvik equation (see Diethelm~\cite[Chapter 8]{Kai}). The results in this paper provide us with a tool to deal with such kind of equations by reducing a multi-term equation to a single or multi-order fractional differential system. 
  This is achieved by the method of change of variable.  This approach is simple and similar to the classical theory of ordinary differential equations, and is treated in the book by Diethelm~\cite[Chapter 8]{Kai}. The main key here is the connection of the Caputo fractional derivatives  of different orders---the semigroup property---provided by Theorem \ref{thm.Cap3}.  

Since the semigroup property derived in Theorem \ref{thm.Cap3} is partial, i.e. true for only limited set of parameter of differentiation, the reduction we are able to do is not simple as its counterpart in the classical theory of ordinary differential equations. 

First, we present a reduction theorem in the case of rational orders of Caputo differentiation.
  Let us emphasize that while our Theorem~\ref{thm.reduce} below looks like Theorem 8.1 of Diethelm \cite{Kai} the important difference is the fact that using our result on semigroup property in Section~\ref{sec.RelDifOrders} we are able to drop the assumption on high smoothness of the solution, thus our theorem provides a natural equivalence of the systems under consideration.
  
   \begin{theorem}[cf. Theorem 8.1 of Diethelm \cite{Kai}] \label{thm.reduce}
    Consider the equation
    \begin{equation}\label{eqn.reduction1}
    {}^{C}\!D_{a}^{\alpha_k}x(t) = f(t,x(t), {}^{C}\!D_{a}^{\alpha_1}x(t),
    {}^{C}\!D_{a}^{\alpha_2}x(t), \ldots, {}^{C}\!D_{a}^{\alpha_{k-1}}x(t)),
    \end{equation}
  $t\in [a,b]$,  subject to the initial conditions
    \begin{equation}\label{eqn.reduction2}
    (D^jx)(a) = x_a^{(j)},
    \quad j=0,1,\ldots \lceil \alpha_k\rceil -1,
    \end{equation}
    where $\alpha_k>\alpha_{k-1}>\cdots >0$, $\alpha_j-\alpha_{j-1}\leq 1$ for all $j=2,3,\ldots,k$ and $0<\alpha_1\leq 1$, $f: [a,b]\times \R^k \to\R$ is a continuous function. Assume that $\alpha_j\in\Q$ for all $j=1,2,\ldots, k$, define $M$ to be the least common multiple of the denominators of $\alpha_1,\alpha_2,\ldots,\alpha_k$ and set $\gamma=1/M$ and $N=M\alpha_k$. Then this initial value problem is equivalent to the system of equations
    \begin{equation}\label{eqn.reduction3}
    \begin{array}{rcl}
    {}^{C}\!D_{a}^{\gamma}x_0(t) &=&x_1(t),\\
    {}^{C}\!D_{a}^{\gamma}x_1(t) &=&x_2(t),\\
    &\vdots&\\
    {}^{C}\!D_{a}^{\gamma}x_{N-2}(t) &=&x_{N-1}(t),\\
    {}^{C}\!D_{a}^{\gamma}x_{N-1}(t) &=&
    f(t,x_0(t), {}^{C}\!D_{a}^{\alpha_1}x_{\alpha_1/\gamma}(t),
     \ldots, {}^{C}\!D_{a}^{\alpha_{k-1}}x_{\alpha_{k-1}/\gamma}(t)),
    \end{array}
    \end{equation}
    together with the initial conditions
     \begin{equation}\label{eqn.reduction4}
     x_j(a)=\left\{
    \begin{array}{ll}
    x_a^{(j/M)},& \hbox{if}\; j/M\in\N,\\
    0,& \hbox{if}\; j/M\not\in\N,
    \end{array}
    \right.
    \end{equation}
    in the following sense.
    
    1. Whenever the vector function $X:=(x_0,x_1,\ldots, x_{N-1})^T$  is the solution of the system \eqref{eqn.reduction3}-\eqref{eqn.reduction4}, the function $x:=x_0$ solves the initial value problem \eqref{eqn.reduction1}-\eqref{eqn.reduction2} on $[a,b]$.
    
    2. Whenever the function $x\in C^{\lceil\alpha_k\rceil-1}$ is the solution the initial value problem \eqref{eqn.reduction1}-\eqref{eqn.reduction2}, the vector function 
    $$
    X:= (x_0, x_1, \ldots,x_{N-1})^T := (x, {}^{C}\!D_{a}^{\gamma}x,
    {}^{C}\!D_{a}^{2\gamma}x,\ldots, {}^{C}\!D_{a}^{(N-1)\gamma}x)^T
    $$ 
    solves the multidimensional initial value problem \eqref{eqn.reduction3}-\eqref{eqn.reduction4}.
   \end{theorem}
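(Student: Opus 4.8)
The plan is to realize the components of $X$ as the tower of Caputo derivatives $x_j={}^{C}\!D_{a}^{j\gamma}x$ and to move up and down this tower using the two halves of the partial semigroup property in Theorem~\ref{thm.Cap3}. Before anything else I would record the bookkeeping forced by $\gamma=1/M$ and $N=M\alpha_k$: first, $N\gamma=\alpha_k$ and, since $\alpha_i<\alpha_k$, the index $\alpha_i/\gamma=M\alpha_i$ is an integer with $1\le M\alpha_i\le N-1$, so the argument $x_{\alpha_i/\gamma}$ of the last equation of \eqref{eqn.reduction3} is a genuine component of $X$ equal to ${}^{C}\!D_{a}^{\alpha_i}x_0$; second, for integer $l$ the Caputo operator ${}^{C}\!D_{a}^{l}$ coincides with $D^l$; and third, the step identity
$$
\lceil (j+1)\gamma\rceil-\lfloor j\gamma\rfloor=1\qquad\text{for all }0\le j\le N-1.
$$
I would prove this identity by observing that $\lceil(j+1)/M\rceil>\lfloor j/M\rfloor$ already forces the left-hand side to be at least $1$, while a value of $2$ would require an integer of the form $(p+1)M$ to lie strictly between the consecutive integers $j$ and $j+1$, which is impossible. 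This identity is the linchpin: it is exactly the hypothesis $\lceil\beta+\gamma\rceil-\lfloor\beta\rfloor=1$ demanded at every rung by Theorem~\ref{thm.Cap3}.

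For Part~2 (original $\Rightarrow$ system) I would set $x_j:={}^{C}\!D_{a}^{j\gamma}x$. Because $x$ is continuously Caputo $\alpha_k$-differentiable and $j\gamma<\alpha_k$ for $j\le N-1$, Theorem~\ref{thm.Cap3}I(i) shows that every $x_j$ is well defined and continuous. The relation ${}^{C}\!D_{a}^{\gamma}x_0=x_1$ holds by definition, and for $1\le j\le N-2$ the relation ${}^{C}\!D_{a}^{\gamma}x_j=x_{j+1}$ follows from Theorem~\ref{thm.Cap3}I(ii) with $\beta=j\gamma$, using $(j+1)\gamma\le\alpha_k$ and the step identity; the same lemma with $\beta=(N-1)\gamma$ gives ${}^{C}\!D_{a}^{\gamma}x_{N-1}={}^{C}\!D_{a}^{\alpha_k}x$, so that \eqref{eqn.reduction1} together with ${}^{C}\!D_{a}^{\alpha_i}x=x_{\alpha_i/\gamma}$ produces the last equation of \eqref{eqn.reduction3}. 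The initial data \eqref{eqn.reduction4} split cleanly: if $j/M\in\N$ then $x_j=D^{j/M}x$ and $x_j(a)=x_a^{(j/M)}$ by \eqref{eqn.reduction2}, whereas if $j/M\notin\N$ then $x_j(a)=0$ is precisely the vanishing conclusion of Theorem~\ref{thm.Cap3}I(i).

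For Part~1 (system $\Rightarrow$ original) I would prove by induction on $j$ that $x_0$ is continuously Caputo $j\gamma$-differentiable with ${}^{C}\!D_{a}^{j\gamma}x_0=x_j$, for $j=1,\dots,N$. The base case is the solution property ${}^{C}\!D_{a}^{\gamma}x_0=x_1\in C[a,b]$. For the step I would apply the converse semigroup Theorem~\ref{thm.Cap3}II with $h=x_0$, $\beta=j\gamma$, step $\gamma$, and $h_1=x_j$: the requirement $x_0\in C^{\lceil j\gamma\rceil-1}[a,b]$ is automatic from Caputo $j\gamma$-differentiability, $h_1=x_j$ is continuously Caputo $\gamma$-differentiable since ${}^{C}\!D_{a}^{\gamma}x_j=x_{j+1}\in C[a,b]$ (for $j=N-1$ the right-hand side of \eqref{eqn.reduction3} is continuous because $f$ is), and the step identity supplies $\lceil\beta+\gamma\rceil-\lfloor\beta\rfloor=1$. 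The decisive hypothesis is $h_1(a)=x_j(a)=0$ whenever $\beta=j\gamma\notin\N$, which is supplied verbatim by \eqref{eqn.reduction4}. The induction yields ${}^{C}\!D_{a}^{\alpha_k}x_0={}^{C}\!D_{a}^{\gamma}x_{N-1}=f(t,x_0,x_{\alpha_1/\gamma},\dots,x_{\alpha_{k-1}/\gamma})$; substituting $x_{\alpha_i/\gamma}={}^{C}\!D_{a}^{\alpha_i}x_0$ recovers \eqref{eqn.reduction1}, and evaluating $x_{Ml}=D^lx_0$ at $a$ recovers \eqref{eqn.reduction2}.

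The step that I expect to carry the real weight is the interlock between the combinatorial identity and the two structural hypotheses of Theorem~\ref{thm.Cap3}II, namely $\lceil\beta+\gamma\rceil-\lfloor\beta\rfloor=1$ and the vanishing $h_1(a)=0$ in the non-integer case. One must verify that at every rung --- including those where $j\gamma$ passes through or lands exactly on an integer --- both hypotheses are met; the smallness of the step $\gamma=1/M$ is exactly what prevents the tower from skipping an integer order and thereby keeps the semigroup lemma applicable, while the prescribed zero initial values are precisely what that lemma needs at the non-integer rungs. A secondary, purely index-theoretic check is that $M\alpha_i\le N-1$ and $Ml\le N-1$ for the relevant $i$ and $l$ (so every invoked derivative is actually a component of $X$), which I would confirm from $\alpha_i<\alpha_k$, $l\le\lceil\alpha_k\rceil-1$ and $N=M\alpha_k$, distinguishing the cases $\alpha_k\in\N$ and $\alpha_k\notin\N$.
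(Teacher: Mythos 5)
Your proposal is correct and takes essentially the same route as the paper: the paper's own proof simply defers to Diethelm's argument for his Theorem 8.1 with Lemma 3.13 there replaced by Theorem~\ref{thm.Cap3}, and your tower construction $x_j={}^{C}\!D_{a}^{j\gamma}x$ together with the step identity $\lceil(j+1)\gamma\rceil-\lfloor j\gamma\rfloor=1$ and the use of parts I and II of Theorem~\ref{thm.Cap3} for the two directions is exactly that argument written out in full. The only remark is that you (reasonably) read the arguments in the last equation of \eqref{eqn.reduction3} as the components $x_{\alpha_i/\gamma}={}^{C}\!D_{a}^{\alpha_i}x_0$ themselves, which is the intended meaning.
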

   \begin{proof}
   Note that the only difference between our theorem and Theorem 8.1 of Diethelm~\cite{Kai} is the assumption of smoothness of solution: we require the solution of the problem \eqref{eqn.reduction1}-\eqref{eqn.reduction2} to be continuously Caputo $\alpha_k$-differentiable whereas in Theorem 8.1 of \cite{Kai} one needs a stronger assumption of $C^{\lceil\alpha_k\rceil}[a,b]$. 
   
  It is easily seen that same arguments as in the proof of Theorem 8.1 of \cite{Kai} with the change of the usage of Lemma 3.13 there to the usage of our Theorem~\ref{thm.Cap3} furnish the proof of the theorem.
  \end{proof}
   
   \begin{corollary}\label{cor.1}
   In the situation of Theorem~\ref{thm.reduce}, assume additionally that the function $f$ is uniformly Lipschitz in the $x$ variable, i.e. there exists $L>0$ such that
   \begin{equation*}
    |f(t,y)-f(t,z)|\leq L\|y-z\|, \quad\hbox{for all}\quad t\in [a,b], \; y,z\in\R^k.
   \end{equation*}
   Then both the initial value problems  \eqref{eqn.reduction1}-\eqref{eqn.reduction2} and 
   \eqref{eqn.reduction3}-\eqref{eqn.reduction4} have unique solutions and the solutions agree in the sense of  Theorem~\ref{thm.reduce}.
   \end{corollary}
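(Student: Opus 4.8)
The plan is to prove existence and uniqueness first for the reduced single-order system \eqref{eqn.reduction3}--\eqref{eqn.reduction4}, and then to transport both statements to the original multi-term problem \eqref{eqn.reduction1}--\eqref{eqn.reduction2} by means of the equivalence already established in Theorem~\ref{thm.reduce}. The point is that \eqref{eqn.reduction3}--\eqref{eqn.reduction4} is nothing but a single Caputo fractional differential equation of the fixed order $\gamma=1/M\in(0,1]$ for the vector unknown $X=(x_0,\ldots,x_{N-1})^T$, to which the classical Picard-type existence and uniqueness theory for $0<\gamma\le 1$ applies directly once the Lipschitz hypothesis is in force.

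First I would write the reduced system compactly as ${}^{C}\!D_{a}^{\gamma}X(t)=F(t,X(t))$, $X(a)=X_0$, where $X_0$ is the vector prescribed in \eqref{eqn.reduction4} and $F(t,X)=(x_1,\ldots,x_{N-1},f(t,x_0,x_{\alpha_1/\gamma},\ldots,x_{\alpha_{k-1}/\gamma}))^T$. The first $N-1$ coordinates of $F$ are coordinate projections and hence globally Lipschitz in $X$ with constant $1$, while the last coordinate is $f$ composed with projections, hence globally Lipschitz with constant $L$; thus $F$ is continuous on $[a,b]\times\R^N$ and globally Lipschitz in $X$. Since $0<\gamma\le 1$, the initial value problem is equivalent to the Volterra integral equation $X(t)=X_0+\frac{1}{\Gamma(\gamma)}\int_a^t (t-s)^{\gamma-1}F(s,X(s))\,ds$, and I would apply Banach's fixed point theorem to the corresponding Picard operator on $C([a,b],\R^N)$ to obtain a unique continuous solution on all of $[a,b]$.

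Then the equivalence of Theorem~\ref{thm.reduce} does the rest. By part 1 of that theorem, the unique solution $X=(x_0,\ldots,x_{N-1})^T$ just produced yields a solution $x=x_0$ of \eqref{eqn.reduction1}--\eqref{eqn.reduction2}, so the original problem is solvable. For uniqueness, any solution $x\in C^{\lceil\alpha_k\rceil-1}$ of \eqref{eqn.reduction1}--\eqref{eqn.reduction2} gives, by part 2 of Theorem~\ref{thm.reduce}, the solution $X=(x,{}^{C}\!D_{a}^{\gamma}x,\ldots,{}^{C}\!D_{a}^{(N-1)\gamma}x)^T$ of \eqref{eqn.reduction3}--\eqref{eqn.reduction4}; the already proved uniqueness for the reduced system forces this $X$, and in particular $x=x_0$, to be uniquely determined. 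The agreement of the two solutions is then exactly the content of Theorem~\ref{thm.reduce}.

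The main obstacle is the global (all of $[a,b]$) existence and uniqueness for the reduced system, because the kernel $(t-s)^{\gamma-1}$ is weakly singular and a plain sup-norm estimate makes the Picard operator a contraction only on a short subinterval. I would circumvent this either by equipping $C([a,b],\R^N)$ with a weighted Bielecki-type norm (a Mittag-Leffler weight $E_\gamma(\lambda (t-a)^\gamma)$) in which the Picard operator becomes a contraction for large $\lambda$, or by showing that a sufficiently high iterate of the Picard operator is a contraction in the sup norm; either device upgrades local to global existence and uniqueness. A secondary point to verify is that the artificial zero initial conditions in \eqref{eqn.reduction4} and the regularity class $C^{\lceil\alpha_k\rceil-1}$ cause no mismatch when passing between the two problems, but this is precisely what Theorem~\ref{thm.reduce} guarantees.
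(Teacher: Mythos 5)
Your proposal is correct and is essentially the argument the paper intends: the corollary is stated without proof, but the strategy --- establish global existence and uniqueness for the reduced single-order system \eqref{eqn.reduction3}--\eqref{eqn.reduction4} (where the order $\gamma\in(0,1]$ and the right-hand side is globally Lipschitz, so the standard Volterra/Picard theory with a Bielecki-type weighted norm applies) and then transport both existence and uniqueness through the two-way equivalence of Theorem~\ref{thm.reduce} --- is exactly the one the paper itself uses for the analogous Theorem~\ref{thm.reduce2}, where it cites an existence--uniqueness result for the reduced system in place of your explicit fixed-point argument. The only point worth flagging is notational: the last line of \eqref{eqn.reduction3} should be read with arguments $x_{\alpha_j/\gamma}(t)$ rather than ${}^{C}\!D_{a}^{\alpha_j}x_{\alpha_j/\gamma}(t)$ (as in Diethelm's Theorem 8.1), which is how you correctly set up $F$.
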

   
   As noted by Diethelm~\cite{Kai}, Theorem~\ref{thm.reduce1} has limitation in applications: it is applicable only to the case of rational indices of differentiation or commensurate indices if all the indices are not greater than 1. Furthermore, the reduced single-term system may be of very high dimension because the parameter $\gamma$ there can be very small. A way to tackle these drawbacks would be the use of  a multi-order Caputo fractional system instead of single-term Caputo fractional system what we present below.
   
   Let us consider Caputo fractional differential equation \eqref{eqn.reduction1} on $[a,b]$ subject to the initial conditions  \eqref{eqn.reduction2}. Put $\beta_1:=\alpha_1$, $\beta_j:= \alpha_j-\alpha_{j-1}$ for $j=2,\ldots,k$, $x_1:=x$, $x_j:= {}^{C}\!D_{a}^{\alpha_{j-1}}x$ for $j=2,\ldots,k$. Without loss of generality we assume further that there is no integer lying between two consecutive numbers $\alpha_j$, $j=1,\ldots,k$ because we may add an integer derivative of $x$ to the set of variables of $f$ without changing the function $f$ in \eqref{eqn.reduction1}. Note that by the assumption on $\alpha_j$, $j=1,\ldots, k$, we have $0<\beta_j\leq 1$ for all $j$. We have the following equivalence result
   
   \begin{theorem}[cf. Theorem 8.9 of Diethelm \cite{Kai}]\label{thm.reduce1}
  Subject to the above conditions, the multi-term equation \eqref{eqn.reduction1} with the initial conditions 
   \eqref{eqn.reduction2} is equivalent to the system
    \begin{equation}\label{eqn.reduction5}
    \begin{array}{rcl}
    {}^{C}\!D_{a}^{\beta_1}x_1(t) &=&x_2(t),\\
    {}^{C}\!D_{a}^{\beta_2}x_2(t) &=&x_3(t),\\
    &\vdots&\\
    {}^{C}\!D_{a}^{\beta_{k-1}}x_{k-1}(t) &=&x_{k}(t),\\
    {}^{C}\!D_{a}^{\beta_k}x_{k}(t) &=&
    f(t,x_1(t), x_2(t),
     \ldots, x_k(t))
    \end{array}
    \end{equation}
     with the initial conditions
     \begin{equation}\label{eqn.reduction6}
     x_j(a)=\left\{
    \begin{array}{ll}
    x_a^{(0)},& \hbox{if}\; j=1,\\
    x_a^{(l)},& \hbox{if}\; j=l\in\N,\\
    0,& \hbox{else},
    \end{array}
    \right.
    \end{equation}
    in the following sense.
   
   1. Whenever the function $x\in C^{\lceil\alpha_k\rceil-1}$ is a solution of the multi-term Caputo fractional differential equation \eqref{eqn.reduction1} with the initial conditions \eqref{eqn.reduction2}, the vector function $X:= (x_1, x_2, \ldots,x_{k})^T$ with 
     \begin{equation}\label{eqn.reduction7}
     x_j(t) :=\left\{
    \begin{array}{ll}
    x(t),& \hbox{if}\; j=1,\\
    {}^{C}\!D_{a}^{\alpha_{j-1}}x(t),& \hbox{if}\; j\geq 2,
      \end{array}
    \right.
    \end{equation}
    is a solution of the multi-order Caputo fractional differential equation \eqref{eqn.reduction5} with initial conditions  \eqref{eqn.reduction6}.
    
    2. Whenever the vector function $X:=(x_1,x_2,\ldots, x_{k})^T$  is a solution of the multi-order Caputo fractional differential system \eqref{eqn.reduction5} with initial conditions \eqref{eqn.reduction6}, the function $x:=x_1$ solves the multi-term Caputo fractional differential equation \eqref{eqn.reduction1} with the initial conditions \eqref{eqn.reduction2} on $[a,b]$.
   \end{theorem}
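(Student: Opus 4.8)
The plan is to derive both implications directly from the partial semigroup property of Theorem~\ref{thm.Cap3}, with the system's initial data \eqref{eqn.reduction6} serving precisely to supply the vanishing-value hypothesis needed in its converse part. The first observation I would record is that the standing assumption---that no integer lies strictly between two consecutive numbers $\alpha_{j-1}$ and $\alpha_j$, together with $\beta_j=\alpha_j-\alpha_{j-1}\in(0,1]$---is equivalent to $\lceil\alpha_j\rceil-\lfloor\alpha_{j-1}\rfloor=1$ for every $j$. This is exactly the index condition $\lceil\beta+\gamma\rceil-\lfloor\beta\rfloor=1$ under which Theorem~\ref{thm.Cap3} applies, with $\beta=\alpha_{j-1}$ and $\gamma=\beta_j$ (so that $\beta+\gamma=\alpha_j$).

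For Part 1, suppose $x\in C^{\lceil\alpha_k\rceil-1}$ solves \eqref{eqn.reduction1}--\eqref{eqn.reduction2} and define $x_1,\ldots,x_k$ by \eqref{eqn.reduction7}. The case $j=1$ is immediate, since $\beta_1=\alpha_1$ gives ${}^{C}\!D_a^{\beta_1}x_1={}^{C}\!D_a^{\alpha_1}x=x_2$. For $2\leq j\leq k-1$, because $x$ is continuously Caputo $\alpha_k$-differentiable and $\alpha_j=\alpha_{j-1}+\beta_j\leq\alpha_k$, Theorem~\ref{thm.Cap3}(I) yields that $x_j={}^{C}\!D_a^{\alpha_{j-1}}x$ is continuously Caputo $\beta_j$-differentiable with ${}^{C}\!D_a^{\beta_j}x_j={}^{C}\!D_a^{\alpha_j}x=x_{j+1}$; the final equation follows identically after substituting the original equation \eqref{eqn.reduction1} for ${}^{C}\!D_a^{\alpha_k}x$. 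The initial conditions \eqref{eqn.reduction6} are matched by reading off $x_1(a)=x(a)=x_a^{(0)}$, while for $j\geq2$ Theorem~\ref{thm.Cap3}(I)(i) gives $x_j(a)=0$ when $\alpha_{j-1}\notin\N$, and $x_j(a)=(D^{\alpha_{j-1}}x)(a)=x_a^{(\alpha_{j-1})}$ when $\alpha_{j-1}\in\N$, the integer-order Caputo derivative coinciding with the classical one.

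For Part 2 I would argue by induction on $j$ that $x:=x_1$ is continuously Caputo $\alpha_j$-differentiable with ${}^{C}\!D_a^{\alpha_j}x=x_{j+1}$. The base case $j=1$ is the first equation of \eqref{eqn.reduction5}. For the inductive step, the hypothesis gives $x_j={}^{C}\!D_a^{\alpha_{j-1}}x$, and the $j$-th equation asserts that $x_j$ is continuously Caputo $\beta_j$-differentiable with ${}^{C}\!D_a^{\beta_j}x_j=x_{j+1}$; the converse semigroup property Theorem~\ref{thm.Cap3}(II) then upgrades this to $x$ being continuously Caputo $\alpha_j$-differentiable with ${}^{C}\!D_a^{\alpha_j}x={}^{C}\!D_a^{\beta_j}x_j=x_{j+1}$, provided the hypothesis $x_j(a)=0$ holds when $\alpha_{j-1}\notin\N$---and this is exactly what \eqref{eqn.reduction6} prescribes. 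At $j=k$ this delivers $x\in C^{\lceil\alpha_k\rceil-1}$ (regularity bootstrapped automatically, since continuous Caputo $\alpha_k$-differentiability presupposes membership in $C^{\lceil\alpha_k\rceil-1}$) together with ${}^{C}\!D_a^{\alpha_k}x={}^{C}\!D_a^{\beta_k}x_k=f(t,x_1,\ldots,x_k)=f(t,x,{}^{C}\!D_a^{\alpha_1}x,\ldots,{}^{C}\!D_a^{\alpha_{k-1}}x)$, which is \eqref{eqn.reduction1}. The integer-order initial conditions \eqref{eqn.reduction2} are recovered from those components $x_j$ with $\alpha_{j-1}=l\in\N$, for which $x_j=D^{l}x$ forces $(D^{l}x)(a)=x_j(a)=x_a^{(l)}$.

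I expect the backward direction to be the main obstacle. The delicate point is that each inductive application of Theorem~\ref{thm.Cap3}(II) requires the vanishing initial value $x_j(a)=0$ of the intermediate derivative at non-integer orders, so one must trace carefully how the prescribed data \eqref{eqn.reduction6} feed the hypotheses of the converse semigroup property, and confirm that the regularity class $C^{\lceil\alpha_j\rceil-1}$ needed even to define ${}^{C}\!D_a^{\alpha_j}x$ is produced at each stage rather than assumed a priori.
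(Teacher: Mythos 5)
Your proposal is correct and follows essentially the same route as the paper: the paper's proof of this theorem simply refers back to Theorem~\ref{thm.reduce}, whose proof in turn is Diethelm's chaining argument with Lemma 3.13 of \cite{Kai} replaced by the partial semigroup property of Theorem~\ref{thm.Cap3}, and your write-up is exactly that argument made explicit (Theorem~\ref{thm.Cap3}(I) for the forward direction, Theorem~\ref{thm.Cap3}(II) with the vanishing initial data of \eqref{eqn.reduction6} for the converse). Your observation that the ``no integer strictly between consecutive $\alpha_j$'' normalization is precisely the index condition $\lceil\beta+\gamma\rceil-\lfloor\beta\rfloor=1$ is the correct and essential link, and is left implicit in the paper.
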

   
   \begin{proof}
   Similar to the proof of Theorem \ref{thm.reduce} above.
   \end{proof}

   \begin{remark}\label{rem.reduce}
   (i) Theorems~\ref{thm.reduce} and \ref{thm.reduce1} are still valid if we considers the equations on the infinite time interval $[a,\infty)$.
   
   (ii) In the situation of Theorem~\ref{thm.reduce}, if $\alpha_k\leq 1$ then one needs not assume the indices $\alpha_j$, $j=1,\ldots, k$ to be rational numbers but only the assumption that they are commensurate, i.e. $\alpha_i/\alpha_j\in\Q$ for all $j,j$, suffices.
   
   (iii) In Theorems~\ref{thm.reduce} and \ref{thm.reduce1}  we do not assume neither existence nor the uniqueness of solutions of the systems in the considerations.
   
   (iv) The smoothness conditions on solutions of the systems considered in Theorems~\ref{thm.reduce} and \ref{thm.reduce1} are natural enough such that it is automatically provided by all the theorems on the existence and uniqueness of the solutions of those systems under a natural requirement of Lipschitz continuity of the coefficients of the systems.
   \end{remark}
   
   \subsection{Existence and uniqueness of solution to multi-term Caputo fractional differential systems}
   
  Based on our results above and the available partial results in the literature we are able to give a concise  proof of a theorem on existence and uniqueness  of solutions to a general multi-term and general multi-order Caputo fractional differential system. Notice that a theorem on existence and uniqueness of solution to multi-term is proved by Diethelm~\cite[Chapter 8]{Kai} using a fairly complicated method of approximation.
   
   \begin{theorem}\label{thm.reduce2}
     Consider the equation
    \begin{equation}\label{eqn.reduction8}
    {}^{C}\!D_{a}^{\alpha_k}x(t) = f(t,x(t), {}^{C}\!D_{a}^{\alpha_1}x(t),
    {}^{C}\!D_{a}^{\alpha_2}x(t), \ldots, {}^{C}\!D_{a}^{\alpha_{k-1}}x(t)),
    \end{equation}
  $t\in [a,b]$,  subject to the initial conditions
    \begin{equation}\label{eqn.reduction9}
    (D^jx)(a) = x_a^{(j)},
    \quad j=0,1,\ldots \lceil \alpha_k\rceil -1,
    \end{equation}
    where $\alpha_k>\alpha_{k-1}>\cdots >0$, $\alpha_j-\alpha_{j-1}\leq 1$ for all $j=2,3,\ldots,k$ and $0<\alpha_1\leq 1$, $f: [a,b]\times \R^k \to\R$ is a continuous function. Assume that $f$ satisfies an uniform Lipschitz condition, i.e. there exists a positive constant $L>0$ such that
    \begin{equation*}%\label{eqn.reduce10}
    |f(t,y)-f(t,z)|\leq L\|y-z\|, \quad\hbox{for all}\quad t\in [a,b], \; y,z\in\R^k.
    \end{equation*}
    Then for any $x_0\in\R^k$ there exists unique continuous solution $x:[a,b] \to \R^k$ to the equation \eqref{eqn.reduction8} that satisfies the initial conditions  \eqref{eqn.reduction9}.
       \end{theorem}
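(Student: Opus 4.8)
The plan is to exploit the equivalence already established in Theorem~\ref{thm.reduce1}, which reduces the multi-term problem \eqref{eqn.reduction8}--\eqref{eqn.reduction9} to the multi-order system \eqref{eqn.reduction5}--\eqref{eqn.reduction6} in which every order of differentiation $\beta_j$ satisfies $0<\beta_j\le 1$. Since the two problems are equivalent in the sense of Theorem~\ref{thm.reduce1}, it suffices to prove existence and uniqueness of a continuous solution $X=(x_1,\dots,x_k)^T$ of the multi-order system; the sought solution of the original equation is then $x:=x_1$, and that this $x$ solves \eqref{eqn.reduction8}--\eqref{eqn.reduction9} is exactly the content of part~2 of Theorem~\ref{thm.reduce1}, provided each $x_j$ is continuously Caputo $\beta_j$-differentiable---a property that the construction below guarantees.

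First I would recast the multi-order system as a system of Volterra integral equations. Because each $\beta_j\in(0,1]$ (so $\lceil\beta_j\rceil=1$), the initial value problem ${}^{C}\!D_{a}^{\beta_j}u=v$, $u(a)=u_0$, with $v\in C[a,b]$ is equivalent to $u=u_0+J_a^{\beta_j}v$, and any such $u$ is continuously Caputo $\beta_j$-differentiable with ${}^{C}\!D_{a}^{\beta_j}u=v$. Applying this to each line of \eqref{eqn.reduction5} turns the system into the fixed-point equation $X=\cA X$ on the Banach space $C([a,b],\R^k)$ equipped with the norm $\|X\|=\max_{1\le j\le k}\sup_{a\le t\le b}|x_j(t)|$, where
\begin{align*}
(\cA X)_j(t) &= x_j(a)+J_a^{\beta_j}x_{j+1}(t), \qquad 1\le j\le k-1,\\
(\cA X)_k(t) &= x_k(a)+J_a^{\beta_k}\,f(\cdot,x_1,\dots,x_k)(t),
\end{align*}
and the constants $x_j(a)$ are prescribed by \eqref{eqn.reduction6}. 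A fixed point of $\cA$ is precisely a continuous solution of the integral system, hence of the multi-order system, and conversely.

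Next I would establish that $\cA$ has a unique fixed point by a contraction argument. Using the Lipschitz bound on $f$ and the elementary estimate $|J_a^{\beta}w(t)|\le \frac{(t-a)^{\beta}}{\Gamma(\beta+1)}\sup_{[a,t]}|w|$, one checks that $\cA$ maps $C([a,b],\R^k)$ into itself and that, for $X,\tilde X$,
$$
|(\cA X)_j(t)-(\cA\tilde X)_j(t)|\le \frac{L_j\,(t-a)^{\beta_j}}{\Gamma(\beta_j+1)}\,\|X-\tilde X\|,
$$
with $L_j=1$ for $j<k$ and $L_k=L$. On a short interval $[a,a+\eta]$ the right-hand side is a strict contraction, which yields a unique local solution; to reach the whole of $[a,b]$ in one step I would instead iterate $\cA$ and show that $\|\cA^n X-\cA^n\tilde X\|$ is dominated by a constant of Mittag-Leffler type whose series $\sum_n$ converges, so that Weissinger's fixed-point theorem applies and produces a unique global fixed point on $[a,b]$.

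The main obstacle I anticipate is the coupling of the components: unlike the scalar Caputo equation treated by the classical Volterra--Weissinger argument, here the links $x_j\mapsto x_{j+1}$ mix the different orders $\beta_1,\dots,\beta_k$, so the bound on the iterates $\cA^n$ is not immediate and must be obtained by carefully composing the integral operators $J_a^{\beta_j}$ (using the semigroup property $J_a^{\beta}J_a^{\gamma}=J_a^{\beta+\gamma}$) to control the accumulated exponents $\beta_{i_1}+\cdots+\beta_{i_n}$ and the resulting Gamma factors. Once the summability of these constants is secured, existence and uniqueness for the system follow; transporting existence back via part~2 and uniqueness back via part~1 of Theorem~\ref{thm.reduce1} (which associates to any solution $x$ of the multi-term problem the vector $X$ built from $x$ and its Caputo derivatives) then completes the proof.
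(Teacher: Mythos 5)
Your proposal follows the same skeleton as the paper's proof: both reduce the multi-term problem \eqref{eqn.reduction8}--\eqref{eqn.reduction9} to the multi-order system \eqref{eqn.reduction5}--\eqref{eqn.reduction6} via Theorem~\ref{thm.reduce1}, observe that all orders $\beta_j$ lie in $(0,1]$, establish existence and uniqueness for that system, and transport the result back through the two directions of the equivalence. The only divergence is in the middle step: the paper simply invokes Theorem~2.3 of Diethelm, Siegmund and Tuan \cite{DieSieTuan2017} for the multi-order system, whereas you re-derive that result from scratch by recasting the system as a Volterra fixed-point equation $X=\cA X$ and running a Weissinger-type iteration. Your sketch of that sub-proof is the standard argument and is essentially what the cited theorem contains; the ``obstacle'' you flag --- bounding $\|\cA^n X-\cA^n\tilde X\|$ despite the mixing of the orders $\beta_1,\dots,\beta_k$ --- is real but routine, since each composed kernel contributes an exponent of at least $\beta_{\min}:=\min_j\beta_j$, giving a bound of the form $C^n(b-a)^{n\beta_{\min}}/\Gamma(n\beta_{\min}+1)$ whose series converges. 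So your route is self-contained where the paper's is citation-based, at the cost of leaving that key iterate estimate sketched rather than executed; either way the logical structure and the role of Theorem~\ref{thm.reduce1} (part~2 for existence, part~1 for uniqueness) are identical.
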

       
       \begin{proof}
       Notice that the system \eqref{eqn.reduction8}-\eqref{eqn.reduction9} is exactly the system
       \eqref{eqn.reduction1}-\eqref{eqn.reduction2} treated in Theorem~\ref{thm.reduce1}.
       Use Theorem~\ref{thm.reduce1} to reduce the multi-term Caputo fractional differential system 
       \eqref{eqn.reduction8}-\eqref{eqn.reduction9} to a multi-order system  \eqref{eqn.reduction5}-\eqref{eqn.reduction6}. By Theorem~\ref{thm.reduce1} all the orders of Caputo differentiation in the derived multi-order Caputo fractional differential system \eqref{eqn.reduction5}-\eqref{eqn.reduction6} are in $(0,1]$. Therefore, due to the assumption of Lipschitz continuity of $f$,  Theorem 2.3 of Diethelm, Siengmund and Tuan~\cite{DieSieTuan2017} is applicable to the multi-order system \eqref{eqn.reduction5}-\eqref{eqn.reduction6}, implying that for any initial value \eqref{eqn.reduction6} the multi-order system \eqref{eqn.reduction5} has unique solution. By Theorem~\ref{thm.reduce1}, this implies that for all initial value  \eqref{eqn.reduction9} the original multi-order system \eqref{eqn.reduction8} has unique solution. 
       \end{proof}

   \section{Illustrative examples}\label{sec.examples}
In this section we give two examples. The first is an example of a diagonal system to show non-applicability of Theorems of Diethelm~\cite[Theorem 8.1]{Kai} due to the strong smoothness requirements, and in the meantime Theorem~\ref{thm.reduce} is applicable to that system. The second example is application of the reduction technique to investigate Lyapunov stability of a linear multi-order Caputo fractional system; this example is taken from \cite{DieSieTuan2017}.
 
 \begin{example}[$\alpha$-differentiability but no $C^{\lceil\alpha\rceil}$-smoothness]
 Consider the multi-term Caputo FDE,
 \begin{equation}\label{eqn.ex1}
 {}^C\!D_{0}^{\alpha} x - \lambda{}^C\!D_{0}^{\beta} x=0,
 \end{equation}
 $t\in [0,2]$, $0<\beta<\alpha$. Choose $\lambda=1$, $\beta=1, \alpha=1.5$ so $\alpha-\beta=0.5 =:\gamma$, $\beta=2\gamma$, $\alpha=3\gamma$.
 According to Kilbas et al.~\cite[Theorem 5.13, p. 314]{KilbasSriTru06} this equation has solution
 \begin{eqnarray*}
 x_0(t) &=& E_{\alpha-\beta}(\lambda t^{\alpha-\beta}) -\lambda t^{\alpha-\beta}
 E_{\alpha-\beta,\alpha-\beta+1}(\lambda t^{\alpha-\beta})=1,\\
 x_1(t)&=& tE_{\alpha-\beta,2}(\lambda t^{\alpha-\beta})= t E_{\gamma,2}(t^{\gamma}).
 \end{eqnarray*}
 We have $x_0(0)=1, Dx_0(0)=0$ and $x_1(0)=0, Dx_1(0)=1$. These two solutions form a fundamental system of solutions to the linear equation \eqref{eqn.ex1}.
 
 A simple computation gives  
 $Dx_1= E_\gamma(t^\gamma) + E_{\gamma,\gamma}(t^\gamma) t^\gamma$ on $[0,2]$, and for $0<t\leq 2$
 $$
 D^2 x_1 =
 E_{\gamma,\gamma}(t^\gamma) t^\gamma + 
( DE_{\gamma,\gamma}(t^\gamma)) t^\gamma + E_{\gamma,\gamma}(t^\gamma) \gamma t^{\gamma -1},
 $$
 hence $x_1\in C^1[0,2]\cap C^2(0,2]$ but $x_1\not\in C^2[0,2]$. 
 
 Now we make a change of variables to apply Theorem~\ref{thm.reduce}  to reduce the Caputo fractional equation \eqref{eqn.ex1} to the case of linear system of single-term Caputo fractional differential equations. Put
   $$
   y_1(\cdot) := x(\cdot), \quad y_2(\cdot) := {}^C\!D_{0}^{\gamma} y_1, \quad y_3(\cdot) := {}^C\!D_{0}^{\gamma}y_2(\cdot).
$$   
   Then  the FDE \eqref{eqn.ex1} becomes
   \begin{equation}\label{eqn.ex2}
\left\{ \begin{array}{ccl}
{}^C\!D_{0}^{\gamma} y_1(t)&=&y_2(t),\\[3pt]
{}^C\!D_{0}^{\gamma} y_2(t)&=&y_3(t),\\[3pt]
{}^C\!D_{0}^{\gamma} y_3(t)&=&y_3(t),
\end{array}
\right. 
\end{equation}
   or, in the vector form,
   $$
   {}^C\!D_{0}^{\gamma} y = A y; \quad 
   y:=  \left( \begin{array}{c} y_1\\y_2\\y_3\end{array}\right)\in \R^3,\quad
    A =
 \left(
      \begin{array}{ccc}
0 & 1 & 0\\
      0  & 0&     1             \\
                0& 0 & 1  
      \end{array}
    \right) \in \R^{3\times 3}.
   $$
    The characteristic polynomial of the matrix $A$ of the linear time independent single-term Caputo fractional differential equation \eqref{eqn.ex2} is
    \begin{equation}\label{eqn.ex3}
   \Delta_A(s)=  
   \det ({\rm diag}(\lambda I-A) =
   \lambda^3 - \lambda^2.
   \end{equation}
  The matrix $A$  has eigenvalue $0$ with multiplicity 2 and 1 with multiplicity 1. Hence,  by 
  Diethelm~\cite[Theorem 7.14, p. 151]{Kai} the three-dimensional linear time independent Caputo fractional differential equation \eqref{eqn.ex2} has general solution with the first coordinate given by
     $$
     y_1(t)= a_1+a_2t^\gamma+a_3 E_\gamma(t^\gamma), \quad a_1,a_2,a_3\in\R.
     $$
     Since $0<\gamma<1$ we must have ${}^C\!D_{0}^{\gamma} y_1(0)=0$ hence $a_3=-a_2\Gamma(1+\gamma)$. Therefore the general solution of \eqref{eqn.ex2} is
   $$
  \left\{ \begin{array}{rcl} y_1(t) &=& a_1 +a_2 t^\gamma -
  a_2\Gamma(1+\gamma) E_\gamma(t^\gamma),   \\[3pt]
  y_2(t) &=& a_2\Gamma(1+\gamma) -  a_2\Gamma(1+\gamma)E_\gamma(t^\gamma), \\[3pt] 
  y_3(t) &=& -a_2\Gamma(1+\gamma) E_\gamma(t^\gamma),
   \end{array}\right. 
  $$
  for arbitrary $a_1,a_2\in\R$. By choosing $(a_1=1, a_2=0)$ and $(a_1=0, a_2=-1/\Gamma(1+\gamma))$ we get two vectors
  
  $$
  {\hat y}_1(t)= \left( \begin{array}{c} 1\\0\\0\end{array}\right),\quad
  {\hat y}_2(t)= \left( \begin{array}{c}-1 -t^\gamma+ \Gamma(1+\gamma)E_\gamma(t^\gamma)\\
 -\Gamma(1+\gamma)+ \Gamma(1+\gamma)E_\gamma(t^\gamma)\\
 \Gamma(1+\gamma)E_\gamma(t^\gamma)\end{array}\right)
  $$
  forming a fundamental system of solution of \eqref{eqn.ex2}. Notice that all the solution of \eqref{eqn.ex2} are continuously Caputo fractional $\alpha$-differentiable, but the first coordinate of the solution 
  ${\hat y}_2(t)$ is does not belong to $C^2[0,2]$, hence Theorem 8.1 of Diethelm~\cite{Kai} is not applicable to the solution ${\hat y}_2$ of \eqref{eqn.ex2}, whereas  Theorem~\ref{thm.reduce} is applicable to the solution ${\hat y}_2$ of \eqref{eqn.ex2} allowing us to conclude that the first coordinate of the vector ${\hat y}_2$ is a solution of the equation \eqref{eqn.ex1}.
  
  By the way one may verify that the solutions given by the application of Theorem~\ref{thm.reduce} and the  ones given by the direct computation using \cite[Theorem 5.13, p. 314]{KilbasSriTru06} above agree, i.e.
  $$
  -1 -t^\gamma+ \Gamma(1+\gamma)E_\gamma(t^\gamma) = t E_{\gamma,2}(t^{\gamma}).
  $$ 
  This equality can be verified by using series presentation of $E_\gamma(t^\gamma)$ and $E_{\gamma,2}(t^{\gamma})$.
 \end{example}
 
 \begin{example}[Stability study by reduction to a single-term system]
 Recall Example 3.6 of Diethelm, Siegmund and Tuan~\cite{DieSieTuan2017}. Consider the system
 \begin{equation}\label{eqn.ex4}
   y:=  \left( \begin{array}{c} {}^C\!D_{0}^{1/2}x_1(t)\\{}^C\!D_{0}^{1/4}x_2(t)\end{array}\right)= Ax(t),\quad\hbox{where}\quad
    A =
 \left(
      \begin{array}{cc}
0.00001 & 1 \\
     - 0.0022  & 0.1  
      \end{array}
    \right) .
 \end{equation}
 This two-dimensional system can be written  as a three-dimensional system of order $1/4$ in the form
  \begin{equation}\label{eqn.ex5}
  {}^C\!D_{0}^{1/4}x^*(t)= A^*x^*(t),\quad\hbox{with}\quad 
   A^* =
 \left(
      \begin{array}{ccc}
0 & 1 & 0\\
      0.00001  & 0&     1             \\
                -0.0022& 0 & 0.1  
      \end{array}
      \right).
 \end{equation}
 By Theorem \ref{thm.reduce1} system \eqref{eqn.ex4} is equivalent to the system \eqref{eqn.ex5}. Since 
 $A^*$ has eigenvalues $-0.1.03917$ and $0.101958\pm 0.10385i$,
  the system \eqref{eqn.ex5} is stable we conclude that the system \eqref{eqn.ex4} is stable (see \cite{DieSieTuan2017}). Notice that the stability of \eqref{eqn.ex4} is not easily deducted without its reduction to the single-term system \eqref{eqn.ex5}. Furthermore we also note that the solution of \eqref{eqn.ex5} is not of the class $C^1$, hence Theorem 8.1 of Diethelm \cite{Kai} is not application to this situation.
    \end{example}
   
   \section*{Acknowledgement}
This research is funded by the Vietnam National Foundation for
Science and Technology Development (NAFOSTED).
% under Grant Number {\color{blue}101.03-2017.01.}
%

\end{document}